\newtheorem{thm}{Theorem}
\newtheorem{thl}[thm]{Lemma}
\newtheorem{thp}[thm]{Proposition}
\newtheorem{thc}[thm]{Corollary}
\theoremstyle{definition}
\newcommand{\ang}{\triangleright}
\newcommand{\N}{\mathbb{N}}
\newcommand{\R}{\mathbb{R}}
\newcommand{\C}{\mathbb{C}}
\newcommand{\hsp}{{\hspace{-1pt}}}
\newcommand{\hs}{{\hspace{1pt}}}
\newcommand{\Hh}{{\mathcal{H}}}
\newcommand{\cO}{{\mathcal{O}}}
\newcommand{\K}{{\mathcal{K}}}
\newcommand{\U}{{\mathcal{U}}}   \newcommand{\cU}{{\mathcal{U}}}
\newcommand{\X}{{\mathcal{X}}}
\newcommand{\A}{{\mathcal{A}}}
\newcommand{\F}{{\mathcal{F}}}
\newcommand{\cL}{\mathcal{L}}
\newcommand{\fA}{\mathfrak{A}}
\newcommand{\FF}{\mathbb{F}}
\newcommand{\FFD}{\mathbb{F}(D)}
\newcommand{\BB}{\mathbb{B}}
\newcommand{\BBA}{\mathbb{B}_1(\fA)}
\newcommand{\im}{\mathrm{i}}
\newcommand{\Lin}{{\mathrm{Lin}}}
\newcommand{\vare}{\varepsilon}
\newcommand{\ov}{\overline}
\newcommand{\rf}[1][]{\textup{\eqref{#1}}}
\newcommand{\sut}{{\cU}_q(\mathrm{su}_{1,1})}       
\newcommand{\slr}{{\cU}_q(\mathrm{sl}_2(\R))}
\newcommand{\slrn}{{\cU}_q(\mathrm{sl}_{n+1}(\R))}
\newcommand{\slrm}{{\cU}_q(\mathrm{sl}_{n}(\R))}
\newcommand{\sltn}{{\cU}_q(\mathrm{sl}_{n+1})}
\newcommand{\tr}{\mathrm{tr}}
\newcommand{\qh}{\A_q(1;\R)}
\newcommand{\qhn}{\A_q(n;\R)}
\newcommand{\ld}{\mathcal{L}^+(D)}            
\newcommand{\dd}{\mathrm{d}}
\title{An operator-theoretic approach to invariant
 integrals on quantum homogeneous \boldmath{${\rm sl}_{n+1}(\R)$}-spaces
}
\author{Osvaldo {\sc Osuna Castro} and Elmar {\sc Wagner}}
\date{ }
\begin{document}

\maketitle

\thispagestyle{empty}

\mbox{ }\\[-24pt]
\centerline{{\small Instituto de F\'isica y Matem\'aticas}}
\centerline{{\small Universidad Michoacana de San Nicol\'as de Hidalgo,
Morelia, M\'exico }}
\centerline{{\small E-mail: osvaldo@ifm.umich.mx, elmar@ifm.umich.mx}}

\begin{abstract}
We present other examples illustrating the operator-theoretic approach to invariant
integrals on quantum homogeneous spaces developed by K\"ursten
and the second author. The quantum spaces are chosen such that their coordinate
algebras do not admit bounded Hilbert space representations and their
self-adjoint generators have continuous spectrum.
Operator algebras of trace class operators are associated to the coordinate
algebras which allow interpretations as rapidly decreasing functions and as
finite functions. The invariant integral is defined as a trace functional
which generalizes the well-known quantum trace.
We argue that previous algebraic methods would fail for these examples.   
\end{abstract}
%
{\small
{\bf Mathematics Subject Classifications (2000):} 17B37, 47L60, 81R50\\[0pt]
{\bf Key words:} invariant integration, quantum groups, operator algebras
}

%
\section{Introduction}
%
                                                          \label{sec-mot}

In a series of papers, Shklyarov, Sinel'shchikov and Vaksman studied
non-commutative analogues of bounded symmetric domains of
non-compact type \cite{VK1,VKcov,VK2,VKPrag,VK3}.
The corresponding non-commutative algebras
can be described by stating the commutation relations of
their generators which are viewed as coordinate functions.
As a first step toward the development of function theory and harmonic analysis,
the authors defined covariant differential calculi and invariant integrals
on these algebras. Naturally, because of the non-compactness, the
invariant integral does not exist on polynomial functions in the
coordinates. To circumvent the problem, the authors introduced
algebras of finite functions on the quantum space.
Basically, this was done by first considering a faithful Fock-type representation,
where some distinguished self-adjoint operators have a discrete joint spectrum,
and then adjoining functions with finite support (on the joint spectrum)
to the (represented) algebra of coordinate functions \cite{KW,VK1,VK2,VKPrag}.
The algebra of finite functions can be equipped with a
symmetry action of a quantum group, and the invariant integral is defined as
a generalization of the (well-known) quantum trace.

As is customary when defining quantum spaces,
the approach of Shklyarov, Sinel'\-sh\-chikov and Vaksman
is almost completely algebraic, using only a minimal amount of topology.
On the other hand, Hilbert space representations of the coordinate algebras
provide a systematic tool for exploring topological questions.
This observation was the starting point for the operator-theoretic approach
to invariant integrals proposed by K\"ursten and the second author \cite{KW}.
The use of operator-theoretic methods has several advantages.
For instance, it allows to adjoin a wider class of integrable functions to the
coordinate algebra, one can prove density and continuity results, and
it works for all Hilbert space representations in the same way.
In particular, the operator-theoretic methods apply to algebras
which to not admit bounded representations and where the
``distinguished self-adjoint operators'' do not have a discrete spectrum.

The objective of the present paper is to provide an example
that can easily be treated by
the operator-theoretic approach developed in \cite{KW} but
for which purely algebraic methods seem to fail.
In order to keep the exposure as close as possible to the
previous paper \cite{KW}, we take the same coordinate algebras
and change only the involution and the values of the deformation
parameter. The corresponding quantum spaces are known as
real quantum hyperboloid and real $q$-Weyl algebra.
The difference of
the involution has profound consequences:
First, there do not exist bounded Hilbert space *-rep\-re\-sen\-ta\-tions, and second,
the *-rep\-re\-sen\-ta\-tions of the coordinate algebra are determined by self-adjoint
operators with continuous spectrum \cite{S3,S4}.
This impedes the description of integrable functions as finite
functions of the self-adjoint generators since these operators are not of trace class
and the generalized quantum trace will not exist on them.
Nevertheless, we shall see that the methods from \cite{KW} can be applied 
even in this situation. 

Let us briefly outline the main ideas of \cite{KW}.
Suppose we are given a Hopf *-algebra $\U$ acting on a
*-algebra of coordinate functions $\X$. It is natural to require that the
action respects the Hopf *-structure of $\U$ and the multiplicative structure
of $\X$. In other words, we assume that $\X$ is a $\U$-module *-algebra.
Let $\pi:\X\rightarrow \ld$ be a *-representation
of $\X$ into a *-algebra
of closeable operators on a pre-Hilbert space $D$.
Starting point of the operator-theoretic approach is
an \emph{operator expansion} of the action. This means that for each
$Z\in\U$ there exists a
finite number of operators $L_i,R_i\in \ld$ such that
\begin{equation}                                     \label{oprep}
     \pi(Z\ang x)=\sum_i L_i\pi(x)R_i,\quad \ x\in\X,
\end{equation}
where $\ang$ denotes the (left) $\U$-action on $\X$.
Obviously, it suffices to know the operators $L_i$, $R_i$ for a set of
generators of $\U$. The operator expansion allows us to extend
the action to the *-algebra $\ld$ turning it into a
$\U$-module *-algebra. Inside $\ld$, we find two $\U$-module *-subalgebra
of particular interest. The first one is the algebra of finite rank operators
and will be considered as the algebra of finite functions associated to $\X$.
The second one is an algebra of trace class operators which is stable under
multiplication by operators from the operator expansion. This algebra will
be viewed as an algebra of functions which vanish
sufficiently rapidly at ``infinity''. On both algebras,
the invariant integral can be defined by a trace formula
resembling the quantum trace of finite dimensional representations of $\U$.

To make the paper more readable, we first discuss in Section \ref{sec-qh}
the lowest dimensional case of a $q$-Weyl algebra, namely the so-called
real quantum hyperboloid. The general case will be treated
in Section \ref{sec-qhn}.

%
\section{Preliminaries}
                                                          \label{algprel}
Throughout the paper,
$q$ is a complex
number such that $|q|=1$ and $q^4\neq1$.
The letter $\im$ stands for the imaginary unit, and
we set $\lambda := q-q^{-1}$. Note that $\lambda\in\im \R$.

Let $\cU$ be a Hopf *-algebra with
comultiplication $\Delta$, counit $\varepsilon$, and antipode $S$.
Adopting Sweedler's notation, we write  $\Delta(x)=x_{(1)}\otimes x_{(2)}$
for $x\in\U$.  An *-algebra $\X$ is called {\it left $\cU$-module *-algebra} \cite{KS}
if there is a left $\cU$-action $\ang$ on $\X$ such that
\begin{equation}                                     \label{modalg}
f\ang (xy)=(f_{(1)}\ang x)(f_{(2)}\ang y),\quad (f\ang x)^* =S(f)^*\ang x^*,\quad
x,y\in\X,\ f\in\cU.
\end{equation}
For unital algebras, one additionally requires
\begin{equation}                                      \label{modeins}
   f\ang 1=\varepsilon(f)1,\quad f\in\cU.
\end{equation}
By an {\it invariant integral} we mean a linear functional $h$ on $\X$
satisfying
\begin{equation}                                       \label{invint}
   h(f\ang x)=\varepsilon(f) h(x),\quad x\in\X,\ f\in \cU.
\end{equation}
Synonymously, we refer to it as {\it $\cU$-invariant}.

In this paper, the Hopf *-algebra under consideration will be a
$q$-deformation of the universal enveloping algebra of ${\rm sl}_{n+1}(\C)$
with non-compact real form. Let $n\in\N$.
Recall that the Cartan matrix $(a_{ij})_{i,j=1}^n$ of
${\rm sl}_{n+1}(\C)$ is given by
$a_{j,j+1}=a_{j+1,j}=-1$ for $j=1,\ldots,n\hsp - \hsp 1$,\,
$a_{jj}=2$ for $j=1,\ldots,n$,
and $a_{ij}=0$ otherwise.
The Hopf *-algebra $\slrn$ is
generated by $K_j$, $K_j^{-1}$, $E_j$, $F_j$,\,
$j\hsp =\hsp 1,\ldots, n$, with relations  \cite{KS}
\begin{align*}                                
 &K_iK_j=K_jK_i,\  K_j^{-1}K_j=K_jK_j^{-1}=1,\
 K_iE_j=q^{a_{ij}}E_jK_i,\   K_iF_j=q^{-a_{ij}}F_jK_i,\\
& E_iE_j-E_jE_i=0,\ \, i \neq  j \pm 1,
\ \ 
E_j^2E_{j\pm1}-(q +  q^{-1})E_jE_{j\pm1}E_j+E_{j\pm1}E_j^2=0,\\%
&F_iF_j-F_jF_i=0,\ \,i \neq  j \pm 1,
\quad
F_j^2F_{j\pm1}-(q +  q^{-1})F_jF_{j\pm1}F_j+F_{j\pm1}F_j^2=0,\\
 & E_iF_j-E_jF_i=0,\ \, i \neq  j,  \quad
     E_jF_j-F_jE_j=\lambda^{-1}(K_j-K_j^{-1}),\quad j =  1,\ldots, n, 
\end{align*}
comultiplication, counit and antipode given by 
\begin{eqnarray*}            \label{delta}
&\hsp\hsp\Delta (E_j)\hsp =\hsp E_j\hsp\otimes\hsp 1\hsp +\hsp K_j\hsp\otimes\hsp E_j, \quad\!\!
\Delta(F_j)\hsp =\hsp F_j\hsp\otimes\hsp K_j^{-1}\hsp+\hsp1\hsp\otimes\hsp F_j,\quad\!\!
\Delta(K_j)\hsp =\hsp K_j\hsp\otimes\hsp K_j,&\\
&\varepsilon(K_j)=\varepsilon(K_j^{-1})=1,\quad
\varepsilon(E_j)=\varepsilon(F_j)=0,&\\
 &S(K_j)=K_j^{-1},\quad  S(E_j)=-K_j^{-1}E_j,\quad S(F_j)=-F_jK_j,&
\end{eqnarray*}
and involution
\begin{equation*} 
K_i^*=K_i,\quad E_j^*=E_j,\quad F_j^*=F_j.
\end{equation*}

If $n=1$, we write $K$, $K^{-1}$, $E$, $F$ rather than
$K_1$, $K_1^{-1}$, $E_1$, $F_1$. These generators
are hermitian and satisfy the relations
\begin{eqnarray*}                                    
&KK^{-1}=K^{-1}K=1,\quad KEK^{-1}=q^2E,\quad KFK^{-1}=q^{-2}F,&\\
        &EF-FE=(K-K^{-1})/(q-q^{-1}).&
\end{eqnarray*}

We turn now to operator-theoretic preliminaries.
Let $\Hh$ be a Hilbert space.
For a closable densely defined operator $T$ on $\Hh$,
we denote by $D(T)$, $\bar{T}$, $T^*$ and $|T|$ its domain,
closure, adjoint and modulus, respectively.
Given a dense linear subspace $D$ of $\Hh$, we set
$$
   \ld:=\{\,x\in {\rm End}(D)\,;\,D\subset D(x^*),\ x^*D\subset D\,\}.
$$
Clearly, $\ld$ is a unital algebra of closeable operators.
It becomes a *-algebra if we define the involution by
$x\mapsto x^+:=x^*\lceil D$.
Since it should cause no confusion, we shall continue to write
$x^*$ in place of $x^+$. Unital *-subalgebras of $\ld$ are
called {\it O*-algebras}.

Given an O*-algebra $\fA$, set
\begin{equation}                                  \label{BA}
 \BB_1(\fA):=\{\,t\in \ld\,;\, \bar t\Hh\subset D,\ \bar t^*\Hh \subset D,\
  \ov{atb}\ \mbox{is\ of\ trace\ class\ for\ all}\ a,b\in\fA\,\}.
\end{equation}
It follows from \cite[Lemma 5.1.4]{S} that $\BB_1(\fA)$ is a *-subalgebra
of $\ld$. Next, let
\begin{equation}                                  \label{FD}
  \FF(D):=
\{\, x\in \ld\,;\, \bar{x}\ \mbox{is\ bounded},\
 {\rm dim}(\bar x \Hh)<\infty,\ \bar{x}\Hh\subset D,\ \bar{x}^*\Hh\subset D\,\}.
\end{equation}
Note that each element $A\in\FF(D)$ can be written as $A=\sum^{n}_{i=1}\alpha_i e_i\otimes f_i$,
where $n\in\N$, $\alpha_i\in\C$, $f_i,e_i\in D$,
and $(e_i\otimes f_i)(x):=f_i(x)e_i$ for $x\in D$.
Obviously, $\FF(D)\subset\BB_1(\fA)$ and
$1\notin \BB_1(\fA)$ if dim$(\Hh)=\infty$.

By a *-rep\-re\-sen\-ta\-tion $\pi$ of a *-algebra $\X$ on a domain $D$,
we mean a *-ho\-mo\-mor\-phism $\pi : \fA \rightarrow \ld$.
For notational simplicity, we usually suppress  the rep\-re\-sen\-ta\-tion
and write $x$ instead of $\pi(x)$ when no confusion can arise.
%

%
\section{Real quantum hyperboloid}
                                                            \label{sec-qh}
The *-algebra $\qh$ of coordinate functions on the
real quantum hyperboloid is generated by two hermitian
elements $x$ and $y$ fulfilling
\begin{equation}                                   \label{xy}
  xy-q^2yx=1-q^2.
\end{equation}
Set
\begin{equation}
   Q:=\lambda^{-1}(yx-xy)=q(1-yx).
\end{equation}
Since $\lambda\in\im \R$, we have $Q^{*}=Q$.
The commutation relations of $Q$ with $x$ and $y$ are given by
\begin{equation}                            \label{Qxy}
  Qy=q^2yQ,\quad Qx=q^{-2}xQ.
\end{equation}

Note that the two generators of the quantum disc \cite{KL} satisfy the
same relation as $x$ and $y$, only the involution is
different ($x^{*}=y$).
In \cite[Section 8]{VK3}, one can find an explicit construction of a
$\sut$-action on the quantum disc.
Replacing in \cite{VK3} the involution on $v_{\pm}(0)$ by
$v_{\pm}(0)^*=v_{\pm}(0)$ and performing the construction for
$\slr$ yields the following $\slr$-action on $\qh$:
\begin{equation}                                     \label{qhy}
 K^{\pm1}\ang y=q^{\pm 2}y,\quad E\ang y=\im qy^2,\quad
 F\ang y=\im,
\end{equation}
\begin{equation}                                     \label{qhx}
 K^{\pm}\ang x=q^{\mp2}x,\quad E\ang x=-\im q^{-1},\quad
 F\ang x=-\im q^{2}x^{2}.
\end{equation}
By \cite{VK3}, this action turns $\qh$ into a $\slr$-module
*-algebra.

The crucial step toward
an invariant integration theory on the
quantum hyperboloid is an operator expansion \eqref{oprep} of the action.
This will be done in the next lemma.
\begin{thl}                           \label{qh-act}
Assume that $\pi : \qh\rightarrow \ld$ is a *-rep\-re\-sen\-ta\-tion
of $\qh$ such that $Q^{-1}\in\ld$.
Set
\begin{equation}                              \label{AB}
A:=-\im \lambda^{-1}y,\quad\, B:= -\im\lambda^{-1} q^{-1} Q^{-1}x.
\end{equation}
The formulas
\begin{align}
 & K\ang f =QfQ^{-1},\quad K^{-1}\ang f=Q^{-1}fQ,\label{qhact1}\\
 & E\ang f=Af-QfQ^{-1}A,                          \label{qhact2}\\
 & F\ang f= BfQ-q^2fQB                           \label{qhact3}
\end{align}
applied to $f\in\qh$ define an operator expansion of the action $\ang$
on $\qh$.
If $f$ is taken from $\ld$, then the same formulas turn the
$O^*$-algebra $\ld$ into a $\slr$-module *-algebra.
\end{thl}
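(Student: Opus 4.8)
The plan is to prove the two assertions in turn, reducing everything to the commutation relations \eqref{xy} and \eqref{Qxy} together with the elementary identities $q^2-1=q\lambda$ and $|q|=1$, $\lambda\in\im\R$. First I would check that the right-hand sides of \eqref{qhact1}--\eqref{qhact3} reproduce the action $\ang$ on the generators $x,y$; then I would argue that agreement on generators propagates to all of $\qh$, and that the very same computations endow $\ld$ with a module *-algebra structure. Note at the outset that the formulas map $\ld$ into $\ld$, since $A,B,Q^{\pm1}\in\ld$ (using the hypothesis $Q^{-1}\in\ld$) and $\ld$ is an algebra.

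First I would substitute \eqref{AB} into \eqref{qhact1}--\eqref{qhact3}, evaluate at $f=x$ and $f=y$, and simplify using $QyQ^{-1}=q^2y$, $QxQ^{-1}=q^{-2}x$ and $Q^{-1}xQ=q^2x$, $Q^{-1}yQ=q^{-2}y$ (equivalently \eqref{Qxy}). For instance $E\ang y=Ay-(QyQ^{-1})A=-\im\lambda^{-1}(1-q^2)y^2=\im\lambda^{-1}(q^2-1)y^2=\im qy^2$, the last step by $q^2-1=q\lambda$; the computations for $E\ang x$, $F\ang x$, $F\ang y$ and $K^{\pm1}\ang x$, $K^{\pm1}\ang y$ are entirely analogous, each collapsing via \eqref{xy} and $q^2-1=q\lambda$ to the values in \eqref{qhy}, \eqref{qhx}. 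Along the way I would record the auxiliary twist relations $QAQ^{-1}=q^2A$, $QBQ^{-1}=q^{-2}B$ and $BQ=q^2QB$, as these are used repeatedly below.

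To promote this from generators to all of $\qh$, write $\kappa,\eta,\varphi$ for the maps $f\mapsto QfQ^{-1}$, $f\mapsto Af-QfQ^{-1}A$, $f\mapsto BfQ-q^2fQB$. These satisfy exactly the Leibniz rules dictated by $\Delta(K)=K\otimes K$, $\Delta(E)=E\otimes1+K\otimes E$, $\Delta(F)=F\otimes K^{-1}+1\otimes F$: conjugation $\kappa$ is multiplicative; for $\eta$ the cross terms cancel because $QfQ^{-1}\,QgQ^{-1}=QfgQ^{-1}$; and the rule $\varphi(fg)=\varphi(f)\kappa^{-1}(g)+f\varphi(g)$ holds precisely because $QBQ^{-1}=q^{-2}B$. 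Since the genuine action $\ang$ obeys the same rules \eqref{modalg} and a module *-algebra action is determined by the action of the generators of $\slr$ on $x,y$, agreement on $x,y$ forces agreement on all of $\qh$, establishing the operator expansion. Crucially, these Leibniz verifications are purely formal consequences of the relations among $A,B,Q^{\pm1}$, hence hold verbatim for arbitrary $f,g\in\ld$; together with $\kappa(1)=1$, $\eta(1)=0$ and $\varphi(1)=BQ-q^2QB=0$ they give \eqref{modalg}--\eqref{modeins} on $\ld$.

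It remains to check that the maps respect the defining relations of $\slr$ and are *-compatible. The relations $\kappa\kappa^{-1}=\id$, $\kappa\eta\kappa^{-1}=q^2\eta$ and $\kappa\varphi\kappa^{-1}=q^{-2}\varphi$ follow quickly from the twist relations. I expect the main obstacle to be the relation $\eta\varphi-\varphi\eta=\lambda^{-1}(\kappa-\kappa^{-1})$ coming from $EF-FE=\lambda^{-1}(K-K^{-1})$: I would expand both compositions, note that the part with $A,B$ to the left of $f$ collapses via the commutator $[A,B]=AB-BA=-\lambda^{-1}Q^{-1}$ (itself from $yQ^{-1}x-Q^{-1}xy=(q^2-1)Q^{-1}$ and $q^2-1=q\lambda$) to $-\lambda^{-1}Q^{-1}fQ=-\lambda^{-1}\kappa^{-1}(f)$, that the symmetric terms with $A,B$ to the right of $f$ collapse (again by $[A,B]$) to $\lambda^{-1}QfQ^{-1}=\lambda^{-1}\kappa(f)$, and that the remaining mixed terms cancel after inserting the twist relations. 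Finally, for $(Z\ang f)^*=S(Z)^*\ang f^*$ I would use $-\im\lambda^{-1}\in\R$ (so $A^*=A$), $\bar q=q^{-1}$ and $Q^*=Q$ to get $B^*=(-\im\lambda^{-1})q\,xQ^{-1}$, then verify the identity on $K,E,F$ (with $S(K)^*=K^{-1}$, $S(E)^*=-EK^{-1}$, $S(F)^*=-KF$); multiplicativity of $\ang$ together with anti-multiplicativity of $S$ and $*$ extends it to all of $\slr$. I anticipate the $[E,F]$-relation to be the only genuinely delicate step, the rest being bookkeeping.
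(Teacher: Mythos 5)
Your proposal is correct and follows essentially the same route as the paper's proof: the same key relations $QA=q^2AQ$, $QB=q^{-2}BQ$, $AB-BA=-\lambda^{-1}Q^{-1}$, the same Leibniz-rule and $[E,F]$ verifications on $\ld$, the same *-compatibility checks via $A^*=A$, $B^*=B$, $S(E)^*=-EK^{-1}$, $S(F)^*=-KF$, and the same reduction of the operator expansion to the generators $x,y$ using the first relation of \eqref{modalg}. The only difference is the order of presentation (you check the generators first and then establish the module *-algebra structure on $\ld$, while the paper does the reverse), which is immaterial.
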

\begin{proof}
First we show that \rf[qhact1]--\rf[qhact3] define an
action on $\ld$ which turns $\ld$ into a $\slr$-module *-algebra.
A straightforward calculation shows that
\begin{equation}                                \label{ABQ}
  QA=q^2AQ,\quad QB=q^{-2}BQ,\quad AB-BA=-\lambda^{-1}Q^{-1}.
\end{equation}
That the action is well-defined can be proved by direct verification
using \rf[ABQ]. As a sample,
\begin{align*}
(EF-FE)\ang f &= ABfQ+QfBA-BAfQ-QfAB\\
            &=(AB-BA)fQ-Qf(AB-BA) \\
                  &= \lambda^{-1} (QfQ^{-1}-Q^{-1}fQ)
             =\lambda^{-1}(K-K^{-1})\ang f.
\end{align*}
for all $f\in\ld$.

To show that $\ld$ is a $\slr$-module *-algebra, it suffices to verify
\rf[modalg] and \rf[modeins] for the generators $E$, $F$, $K$ and $K^{-1}$.
Observe that
$K^{\pm1} \ang 1=1=\vare(K^{\pm1})1$,
$E\ang 1=0=\vare(E)1$ and
$F\ang 1=0=\vare(F)1$, thus \rf[modeins] holds.
Let $f,g\in\ld$. Then
\begin{align*}
&K^{\pm1}\ang (fg)=Q^{\pm1}fgQ^{\mp1}=
Q^{\pm1}fQ^{\mp1}Q^{\pm1}gQ^{\mp1}
                =(K^{\pm1}\ang f)(K^{\pm1}\ang g),\\
&(E\ang f)g+(K\ang f)(E\ang g)=
              (Af-QfQ^{-1}A)g +QfQ^{-1}(Ag-QgQ^{-1}A)\\
&\phantom{(E\ang f)g+(K\ang f)(E\ang g)}
= Afg-QfgQ^{-1}A = E\ang (fg),
\end{align*}
and analogously $(F\ang f)(K^{-1}\ang g)+f(F\ang g)=F\ang (fg)$.
As $Q^*=Q$, we get
$$
(K^{\pm1}\ang f)^*=(Q^{\pm1}fQ^{\mp1})^*=
Q^{\pm1}f^*Q^{\mp1}=K^{\pm1}\ang f^*=
S(K^{\pm1})^*\ang f^*.
$$
Note that also $A^*=A$ and $B^*=B$ since
$\bar q=q^{-1}$ and $\im\lambda^{-1}\in\R$. Thus
\begin{align*}
 (E\ang f)^*&= f^*A-AQ^{-1}f^*Q=-EK^{-1}\ang f^* = S(E)^*\ang f^*,\\
 (F\ang f)^*&=Qf^*B-q^{-2}BQf^*=Q(q^2f^*QB-Bf^*Q)Q^{-1}
            =-KF\ang f^*\\
            &=S(F)^*\ang f^*.
\end{align*}
This proves \rf[modalg]. Therefore the $\slr$-action defined by
\rf[qhact1]--\rf[qhact3] turns $\ld$ into
a $\slr$-module *-algebra.

To demonstrate that Equations \rf[qhact1]--\rf[qhact3] define
an operator expansion of the action~$\ang$ on $\qh$, it now suffices to
verify it
for the generators $x$ and $y$. The result follows then by
applying the first relation of \rf[modalg]. Using \rf[xy], \rf[Qxy] and \rf[AB], we
obtain
\begin{align*}
K^{\pm1}\ang y &=Q^{\pm1}yQ^{\mp1}=q^{\pm2}y,\quad
K^{\pm1}\ang x=Q^{\pm1}xQ^{\mp1}=q^{\mp2}x,\\
E\ang y &=Ay-QyQ^{-1}A=-\im \lambda^{-1}(y^2-q^2y^2)=\im qy^2,\\
E\ang x &=Ax-QxQ^{-1}A=-\im q^{-2}\lambda^{-1}(q^2yx-xy)=-\im q^{-1},
\end{align*}
and similarly $ F\ang y=\im$,\, $F\ang x=-\im q^{2}x^{2}$.
\end{proof}

The aim of this section is to define an invariant integral
on an appropriate class of operators. The problem arises because
there does not exist a normalized invariant integral on
$\qh$. This can be seen as follows:
If there were a $\slr$-invariant functional $h$ on $\qh$ satisfying
$h(1)=1$, then
\begin{equation}
 1=h(1)=-\im h(F\ang y)=-\im \vare(F) h(y)       \label{1=0}
\end{equation}
would contradict $\vare(F)=0$.

In \cite{KW}, the quantum trace formula
$\tr_q(X):=\tr (XK^{-1})$
was generalized
by replacing $K^{-1}$ with the operator
which realizes the operator expansion of $K^{-1}$ on $\ld$.
The element $X$ should belong to an algebra which has the property
that the traces taken on the Hilbert space $\Hh=\bar D$ exist.
Two algebras with this property are described in
Equations \eqref{BA} and \eqref{FD}.
The following proposition shows that the generalized
quantum trace formula does define an invariant integral
on these algebras.
\begin{thp}                                       \label{qh-haar}
Suppose that $\pi:\qh\rightarrow \ld$ is a *-rep\-re\-sen\-ta\-tion of $\qh$
such that $Q^{-1}\in\ld$. Let $\fA$ be the O*-algebra generated
by the elements of $\pi(\qh)$ and $Q^{-1}$.
Then the *-algebras $\BBA$ and $\FFD$ defined in
\rf[BA] and \rf[FD], respectively, are $\slr$-module *-algebras,
where the action is given by \rf[qhact1]--\rf[qhact3].
The linear functional
\begin{equation}                                  \label{qh-h}
    h(g):=c\, \tr (\ov{gQ^{-1}}),\quad c\in\R,
\end{equation}
defines an invariant integral on both $\BBA$ and $\FFD$.
\end{thp}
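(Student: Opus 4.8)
The plan is to treat the statement in two stages: first show that $\BBA$ and $\FFD$ are invariant under the action \rf[qhact1]--\rf[qhact3], and then verify that $h$ is well defined and $\slr$-invariant. Since Lemma~\ref{qh-act} already turns $\ld$ into a $\slr$-module *-algebra, for the first stage it is enough to check invariance, because the defining identities \rf[modalg] are then inherited by any invariant *-subalgebra. The key point is that all operators occurring in \rf[qhact1]--\rf[qhact3], namely $A$, $B$, $Q$ and $Q^{-1}$, lie in $\fA$: indeed $Q=\lambda^{-1}(yx-xy)$ and $A=-\im\lambda^{-1}y$ are polynomials in $\pi(\qh)$, whereas $Q^{-1}\in\fA$ by assumption and $B=-\im\lambda^{-1}q^{-1}Q^{-1}x\in\fA$. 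Consequently each of $K^{\pm1}\ang f$, $E\ang f$, $F\ang f$ is a finite sum $\sum_i L_i f R_i$ with $L_i,R_i\in\fA$. For $\BBA$ invariance is then immediate from \rf[BA], since that condition is exactly stability under two-sided multiplication by $\fA$; for $\FFD$ one notes that $L_i(e\otimes f)R_i=L_ie\otimes R_i^*f$ is again finite rank with $L_ie,R_i^*f\in D$, so \rf[FD] is preserved.

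For the second stage I would first record that $h$ is well defined on $\BBA$ (hence on $\FFD\subset\BBA$): taking $a=1$, $b=Q^{-1}$ in \rf[BA] shows $\ov{gQ^{-1}}$ is of trace class, so the trace in \rf[qh-h] exists. Invariance $h(Z\ang g)=\vare(Z)h(g)$ need only be checked for $Z\in\{K^{\pm1},E,F\}$, and the essential tool is the cyclicity of the trace, $\tr(\ov{atb})=\tr(\ov{tba})$ for $t\in\BBA$ and $a,b\in\fA$, which I would invoke from \cite{S,KW}. For $K$ one has $h(K\ang g)=c\,\tr(\ov{QgQ^{-2}})$, and cycling the leading $Q$ to the far right yields $c\,\tr(\ov{gQ^{-1}})=h(g)=\vare(K)h(g)$; the case $K^{-1}$ is symmetric. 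For $E$ one computes
\[
 h(E\ang g)/c=\tr(\ov{AgQ^{-1}})-\tr(\ov{QgQ^{-1}AQ^{-1}}),
\]
and cyclicity carries both terms to the common value $\tr(\ov{gQ^{-1}A})$, so the difference vanishes and $h(E\ang g)=0=\vare(E)h(g)$.

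The case $F$ is where the commutation relations \rf[ABQ] are needed: from $F\ang g=BgQ-q^2gQB$ one gets $h(F\ang g)/c=\tr(\ov{Bg})-q^2\tr(\ov{gQBQ^{-1}})$, and $QB=q^{-2}BQ$ gives $QBQ^{-1}=q^{-2}B$, so the second trace equals $q^{-2}\tr(\ov{gB})$; cyclicity then makes it cancel the first term, whence $h(F\ang g)=0=\vare(F)h(g)$. I expect the genuine difficulty to lie not in these algebraic manipulations but in the rigorous justification of trace cyclicity for the unbounded operators involved: every cancellation above rests on $\tr(\ov{atb})=\tr(\ov{tba})$, whose validity relies on the trace-class estimates encoded in the definition \rf[BA] of $\BBA$ together with the results cited from \cite{S,KW}. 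Granting that, the remaining steps reduce to the elementary identities \rf[ABQ] and the cyclic permutation of factors.
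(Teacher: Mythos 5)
Your proposal is correct and follows essentially the same route as the paper's proof: stability of $\BBA$ and $\FFD$ under the operator-expansion action combined with Lemma~\ref{qh-act} gives the module *-algebra structure, and invariance is reduced to the generators and settled by the trace cyclicity $\tr(\ov{atb})=\tr(\ov{tba})$ (the paper cites \cite[Corollary 5.1.14]{S}) together with the relation $QB=q^{-2}BQ$ for the $F$-case. The only difference is that you spell out details the paper calls ``obvious'' (stability of the two subalgebras, well-definedness of the trace), which is a harmless refinement rather than a different argument.
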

\begin{proof}
It is obvious from the definitions of $\BBA$ and $\FFD$
that both algebras are stable under the $\slr$-action
given by \rf[qhact1]--\rf[qhact3]. Moreover, by Lemma \ref{qh-act},
$\BBA$ and $\FFD$ are $\slr$-module *-algebras.

Since the action is associative
and $\vare$ is a homomorphism, it suffices to
show the invariance for generators. Let $g\in \BBA$.
It follows from  \cite[Corollary 5.1.14]{S} that
$\tr(\ov{agb})=\tr(\ov{gba})=\tr(\ov{bag})$ for
all $a,b\in\fA$. Hence
$$
h(K^{\pm1}\ang g)=\tr(\ov{Q^{\pm1}gQ^{\mp1}Q^{-1}})=
\tr(\ov{gQ^{-1}})=\vare(K^{\pm1})h(g),
$$
$$
h(E\ang g)= \tr(\ov{AgQ^{-1}}\hsp-\hsp\ov{QgQ^{-1}AQ^{-1}})=
\tr(\ov{AgQ^{-1}})-\tr(\ov{AgQ^{-1}})=0=\vare(E)h(g),
$$
$$
h(F\ang g)= q\tr(\ov{Bg}-\ov{q^2gQBQ^{-1}})=
q\hs \tr(\ov{Bg})-\tr(\ov{Bg})=0=\vare(F)h(g),
$$
where we used $QB=q^{-2}BQ$ in the last line.
This proves the assertion for $\BBA$. Since $\FFD\subset \BBA$,
the same arguments apply to $\FFD$.
\end{proof}

Note that Proposition \ref{qh-haar} was proved without referring explicitly
to the *-rep\-re\-sen\-ta\-tion $\pi:\qh\rightarrow \ld$.
The only assumption on $\pi$ was that $Q^{-1}\in\ld$.
However, it is a priori not clear whether such a
rep\-re\-sen\-ta\-tion exists. That the answer to this question is affirmative
was shown in \cite{S3}. For the convenience of the reader, we
summarize the results from \cite{S3} (see also \cite{S4}).

First we introduce some notations.
Write $q=e^{\im\varphi}$ with $|\varphi|<\pi$ and $s(\varphi)$
for the sign of $\varphi$. Consider the Pauli matrices
\[ \sigma_0:= \left(
      \begin{array}{cc}    1     &    0 \\
                           0     &   -1    \end{array} \right),\quad
\sigma_1:= \left(
      \begin{array}{cc}    0     &    1 \\
                           1     &    0    \end{array} \right).      \]
Let $\K$ be a Hilbert space. The Pauli matrices act on
$\K\oplus\K$ in an obvious way.
We denote by $T$ and $P$ the multiplication operator by the variable
$t$ and the differential operator $\im\frac{\dd}{\dd t}$
acting on $\mathcal{L}^2(\R)$, respectively. An operator $\omega$ on $\K$
is called a symmetry if $\omega$ is unitary and self-adjoint.

For the definition of an integrable (well-behaved) *-rep\-re\-sen\-ta\-tion,
we refer the reader to \cite{S3} (see also \cite{S5}).
By \cite[Theorem 3.7]{S3}, each integrable *-rep\-re\-sen\-ta\-tion of $\qh$
such that $x$ and $y$ are self-adjoint operators and that
$\ker Q=\{0\}$ is unitarily equivalent to a rep\-re\-sen\-ta\-tion
which is given by one of the following models.
\begin{itemize}
\item[$(I)$:] $y=e^T\otimes\omega$,\ \,
$x=((-1)^kqe^{2(\varphi-k\pi)P}+1)e^{-T}\otimes \omega$\\
on $\Hh=\mathcal{L}^2(\R)\otimes\K$, where $\omega$ is a
symmetry on $\K$ and $k\in\{0,s(\varphi)\}$.
\item[$(II)$:] $y=e^T\otimes\sigma_1$,\ \,
$x=qe^{(2\varphi-s(\varphi)\pi)P}e^{-T}\otimes\sigma_0\sigma_1
        +e^{-T}\otimes \sigma_1$\\
on $\Hh=\mathcal{L}^2(\R)\otimes(\K\oplus\K)$.
\end{itemize}
The operator $Q$ and its inverse are given by
\begin{itemize}
\item[$(I)$:] $Q=(-1)^{k+1}e^{2(\varphi-k\pi)P}\otimes 1$,\ \,
        $Q^{-1}=(-1)^{k+1}e^{-2(\varphi-k\pi)P}\otimes 1$,
\item[$(II)$:]  $Q=-e^{(2\varphi-s(\varphi)\pi)P}\otimes\sigma_0$,\ \,
 $Q^{-1}=-e^{-(2\varphi-s(\varphi)\pi)P}\otimes\sigma_0$.
\end{itemize}

There exists a dense linear space $D\subset\Hh$ such that $D$
is an invariant core
for each of the self-adjoint operators $x$, $y$, $Q$ and $Q^{-1}$
and the commutation relation of these operators are
pointwise satisfied on $D$.
For instance, set
\begin{equation}                                    \label{F}
\F:=\Lin \{e^{-\epsilon t^2+\gamma t}\,;\,\epsilon>0,\
\gamma\in\C\},
\end{equation}
and take $D=\F\otimes\K$ and $D=\F\otimes(\K\oplus\K)$
for rep\-re\-sen\-ta\-tions
of type $(I)$ and $(II)$, respectively,
(see \cite[Proposition 2]{S4}). This proves, in particular, the existence
of rep\-re\-sen\-ta\-tions which satisfy the assumptions of
Proposition \ref{qh-haar}.

Motivated by a similar result in \cite{KW}, we view $\BBA$ as
the algebra of infinitely differentiable functions which vanish
sufficiently rapidly at ``infinity'' and $\FFD$ as the
infinitely differentiable functions with compact support.

Note that the rep\-re\-sen\-ta\-tion theory of $\qh$ is much more subtle
in comparison with the quantum disc treated in \cite{KW}.
Whereas for the quantum disc one can define algebras
of  functions which vanish sufficiently rapidly at ``infinity''
and an invariant integral on purely algebraic level
(see \cite[Lemma 3.4]{KW} and \cite[Proposition 3.3]{KW}), the same method
does not apply to $\qh$. For instance, since the
self-adjoint operator $Q$ has
continuous spectrum, there does not exist a non-zero
continuous function $\psi$ on $\sigma(Q)\,(=\R)$ such that
$\psi(Q)Q^{-1}$ is of trace class. Furthermore, apart from
the trivial rep\-re\-sen\-ta\-tion $x=\alpha$, $y=\alpha^{-1}$, where
$\alpha\in\R\backslash \{0\}$, the algebra $\qh$ does not admit other 
(irreducible) bounded *-rep\-re\-sen\-ta\-tions.
Nevertheless, we succeeded in establishing an invariant
integration theory on $\qh$.

\section{Real \boldmath{$q$}-Weyl algebra}
                                                           \label{sec-qhn}

Recall that  $|q|=1$, $q^4\neq1$. Let $n\in\N$.
The *-algebra $\qhn$ with hermitian generators
$x_1,\ldots,x_n$, $y_1,\ldots,y_n$ and relations
\begin{eqnarray}
  y_k y_l &=& q y_l y_k,\quad k<l,       \label{qhn0} \\
  x_k x_l &=& q^{-1} x_l x_k,\quad k<l,       \label{qhn1} \\
  x_l y_k &=& q y_k x_l,\quad k\neq l, \label{qhn2}\\
  x_k y_k &=& q^2 y_k x_k                 \label{qhn3}
  -(1-q^2)\sum^n_{j=k+1}q^{j-k}y_j x_j+(1-q^2)q^{n-k},\quad k<n,\\
 x_n y_n &=& q^2 y_n x_n + (1-q^2)        \label{qhn4}
\end{eqnarray}
is called {\it real $q$-Weyl algebra} \cite{S4}.

Define the hermitian elements
\begin{equation}                              \label{def-Q}
    Q_j=\lambda^{-1}(y_jx_j-x_jy_j),\,\ j\leq n,\,\quad
  Q_{n+1}=1.
\end{equation}
A straightforward calculation shows that
\begin{equation}                              \label{Qy}
 Q_ky_j=y_jQ_k,\,\ j<k,\quad \, Q_ky_j=q^2y_jQ_k,\,\ j\geq k,
\end{equation}
\begin{equation}                              \label{Qx}
Q_kx_j=x_jQ_k,\,\ j<k,\quad\,Q_kx_j=q^{-2}x_jQ_k,\,\ j\geq k.
\end{equation}
This immediately implies
\begin{equation}                              \label{QQn}
              Q_kQ_l=Q_lQ_k ,\,\quad \,
\mathrm{for\ all}\ \, k,l\leq n+1.
\end{equation}
From the definition of $Q_k$ and \rf[qhn3], we obtain
\begin{align*}
Q_k&=\lambda^{-1} (y_kx_k-x_ky_k)=\lambda^{-1}(1-q^2)
\big(y_kx_k+ \mbox{$\sum$}^n_{j=k+1}q^{j-k}y_j x_j-q^{n-k}\big)\\
  &=q\big(q^{n-k}- \mbox{$\sum$}^n_{j=k}q^{j-k}y_j x_j\big),
\end{align*}
so that
\begin{align*}
q^{-1}Q_k&=q^{n-k}\hsp - \hsp \mbox{$\sum$}^n_{j=k}q^{j-k}y_j x_j
   =-y_kx_k+q\big(q^{n-(k+1)}\hsp - \hsp
\mbox{$\sum$}^n_{j=k+1}q^{j-(k+1)}y_j x_j\big)\\
    &=-y_kx_k+Q_{k+1}.
\end{align*}
Hence
\begin{equation}                      \label{QQyx}
y_kx_k=(Q_{k+1}-q^{-1}Q_k),\,\quad
x_ky_k=(Q_{k+1}-qQ_k),
\end{equation}
where the second equation follows from the first one by taking
adjoints. Equation \rf[QQyx] also holds for $k=n$. Furthermore,
from \rf[QQyx],
\begin{equation}                      \label{xyQ}
   x_ky_k-q^2y_kx_k=(1-q^2)Q_{k+1}.
\end{equation}

As in Section \ref{sec-qh}, write $q=e^{\im\varphi}$ with
$|\varphi|<\pi$ and set $q_0:=e^{\im\varphi/2}$.
Consider the $\slrn$-action on $\qhn$ defined by
\begin{align}
&j<n:                                          \label{Anfang}
& & E_j\ang y_k=0,& & k\neq j+1,
 & &  E_j\ang y_{j+1}=-\im q_0^{-1}y_j,\\
&  &  &E_j\ang x_k=0,& & k\neq j,
 & &  E_j\ang x_{j}=\im q_0^{-1}x_{j+1},     \label{Ang3} \\
&  &  &F_j\ang y_k=0, & & k\neq j,
& &  F_j\ang y_{j}=\im q_0y_{j+1},\\
&  &  &F_j\ang x_k=0,  & & k\neq j+1,
 & &  F_j\ang x_{j+1}=-\im q_0x_{j},                       \label{Ang4}\\
&K_j\ang y_k=y_k, & &k\neq j,j+1, & &K_j\ang y_j=qy_j,
 & &  K_j\ang y_{j+1} =q^{-1}y_{j+1},                    \label{Ang1} \\
&K_j\ang x_k=x_k, & &k\neq j,j+1, & &K_j\ang x_j=q^{-1}x_j,
 & &   K_j\ang x_{j+1} =qx_{j+1},                          \label{Ang2}
\end{align}
\begin{align}
&j=n:\qquad \  & &  E_n\ang y_k=\im q y_ny_k, & &\!\! \!\! k<n, \ \ \,\,
& & E_n\ang y_n =\im q y_n^2,                             \label{AngEj}\\
& & & E_n\ang x_k=0, & & \!\!\!\! k<n, \ \ \,\,
& &  E_n\ang x_n =-\im q^{-1},                          \label{Ang7}\\
& & & F_n\ang y_k=0, & & \!\!\!\! k<n,\ \ \,\,
& & F_n\ang y_n = \im,  \\
& & & F_n\ang x_k=-\im q^{2}x_kx_n, & &\!\!\!\! k<n,\ \ \,\,
& & F_n\ang x_n =-\im q^{2}x^{2}_n, \quad \\
& & & K_n\ang y_k=qy_k, & &\!\!\!\!  k<n, \ \ \,\,
& & K_n\ang y_n =q^2y_n,       \label{Ang5}\\
& & & K_n\ang x_k=q^{-1}x_k, & & \!\!\!\! k<n, \ \ \,\,
& & K_n\ang x_n =q^{-2}x_n,                               \label{Ende}
\end{align}
Instead of proving that these formulas define a $\slrn$-action on
$\qhn$, we give an operator expansion of the action showing
thus its existence  since the algebra admits faithful
*-rep\-re\-sen\-ta\-tions. More explicitly, if
$\pi:\qhn\rightarrow\ld$ is a faithful *-rep\-re\-sen\-ta\-tion and
$\pi(\qhn)$ a $\slrn$-module *-algebra with action $\blacktriangleright$,
then setting $X\ang f:=\pi^{-1}(X\blacktriangleright\pi(f))$,
$f\in\qhn$, $X\in\slrn$,
defines a $\slrn$-action on $\qhn$ turning it into a
$\slrn$-module *-algebra.
According to our notational convention, we do not
designate explicitly the rep\-re\-sen\-ta\-tion. If that causes confusion,
one can always think of the elements of $\qhn$ as operators
in $\ld$.

Remind that we only consider ``integrable'' 
rep\-re\-sen\-ta\-tions which are in a certain
sense well-behaved. Integrable rep\-re\-sen\-ta\-tions of $\qhn$ were
defined and classified in \cite{S4}. 
Before describing an operator expansion of the action, we state the
results from \cite{S4}. This allows us to perform any algebraic
manipulation in $\ld$ and to verify directly if certain
assumptions on the operators are satisfied.

We shall adopt the notational conventions of
Section \ref{sec-qh}. In particular,
$T$ and $P$ denote the multiplication operator by the variable
$t$ and the differential operator $\im\frac{\dd}{\dd t}$
acting on $\mathcal{L}^2(\R)$, respectively.
Set $\Hh=\cL^2(\R)^{\otimes n}\otimes \K$, where $\K$ is a
Hilbert space.
Consider the following series of Hilbert space operators
on $\Hh$.
\begin{align*}
&(I):& y_l&=(\otimes_{j=1}^{n-l}e^{(\varphi-k_{n+1-j}\pi)P})
\otimes e^T\otimes1\otimes\cdots\otimes1\otimes\omega_l, \\
& & x_l&=(\otimes_{j=1}^{n-l}(-1)^{(k_{n+1-j}+1)}e^{(\varphi-k_{n+1-j}\pi)P})\\
& & & \hspace{90pt}\otimes ((-1)^{k_l}qe^{2(\varphi-k_l\pi)P}+1)e^{-T}
\otimes1\otimes\cdots\otimes1\otimes \omega_l
\end{align*}
for all $l=1,\ldots,n$, where $k_j\in\{0,s(\varphi)\}$ and
the operators $\omega_j$ are symmetries on $\K$ satisfying
$\omega_j\omega_l=(-1)^{k_j}\omega_l\omega_j$ for $j> l$.
\begin{align*}
&(II): & y_l&=(\otimes_{j=1}^{n-l}e^{(\varphi-k_{n+1-j}\pi)P})
\otimes e^T\otimes1\otimes\cdots\otimes1\otimes\omega_l,\\
& & x_l&=(\otimes_{j=1}^{n-l}(-1)^{(k_{n+1-j}+1)}e^{(\varphi-k_{n+1-j}\pi)P})\\
& & & \hspace{80pt}\otimes ((-1)^{k_l}qe^{2(\varphi-k_l\pi)P}+1)e^{-T}
\otimes1\otimes\cdots\otimes1\otimes \omega_l
\end{align*}
for $l=2,\ldots,n$, and
\begin{align*}
y_1&=(\otimes_{j=1}^{n-1}e^{(\varphi-k_{n+1-j}\pi)P})\otimes
  e^T\otimes\omega_1,\\
x_1&=(\otimes_{j=1}^{n-1}(-1)^{(k_{n+1-j}+1)}e^{(\varphi-k_{n+1-j}\pi)P})\\
    &\hspace{40pt}\otimes (e^{-T}\otimes \omega_1+(-1)^{(k_n+1)+\ldots+(k_{2}+1)}
 qe^{(2\varphi-s(\varphi)\pi)P}e^{-T}\otimes\omega_0\omega_1),
\end{align*}
where $k_j\in\{0,s(\varphi)\}$,\,\ $j=2,\ldots,n$, and
the operators $\omega_j$ are symmetries on $\K$ satisfying
$\omega_j\omega_l=(-1)^{k_j}\omega_l\omega_j$ for 
$j> l$,\,\ $\omega_j\omega_0=\omega_0\omega_j$ for $j\ge 2$
and $\omega_1\omega_0=-\omega_0\omega_1$.

The proof of the following facts can be found in \cite{S4}.
\begin{thp}                                      \label{Darst}
 \renewcommand{\labelenumi}{\roman{enumi}}
\begin{enumerate}
\item Both families of operators $x_1,\ldots,x_n$, $y_1,\ldots,y_n$
      define an integrable *-rep\-re\-sen\-ta\-tion of $\qhn$.
\item Any integrable *-rep\-re\-sen\-ta\-tion of $\qhn$ such that
      ${\rm ker}~Q_j=\{0\}$ for all $j=1,\ldots,n$ is unitarily
      equivalent to one of the above form.
\item There exists a dense domain $D$ of $\Hh$ such that $D$ is an
      invariant core for each of the self-adjoint operators
      $x_j$, $y_j$, $Q_j$, $j=1,\ldots,n$, and the commutation relations
      in $\qhn$ are pointwise fulfilled on $D$, for instance,
      $D:=\F^{\otimes n}\otimes\K$ satisfies this conditions,
      where $\F$ is defined as in \eqref{F}.
\item With $D$ given as in (iii), the *-rep\-re\-sen\-ta\-tion
      of $\qhn$ into $\ld$ is faithful.
\end{enumerate}
\end{thp}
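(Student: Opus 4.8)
The plan is to treat the four assertions separately; all of them are established in \cite{S4}, and the route is as follows. For (i) I would simply substitute the explicit operators into the defining relations \rf[qhn0]--\rf[qhn4] and check them on the common domain. The only non-trivial inputs are the Weyl relation $e^{aP}e^{bT}=e^{\im ab}\,e^{bT}e^{aP}$, which follows from $[P,T]=\im$, and the prescribed (anti)commutation relations $\omega_j\omega_l=(-1)^{k_j}\omega_l\omega_j$ among the symmetries. Since every tensor factor is built from the self-adjoint operators $T$ and $P$, from bounded multiplication and translation operators, and from the symmetries $\omega_j$, each $x_l$ and $y_l$ is essentially self-adjoint and the $q$-relations hold in their exponentiated (Weyl) form; this is exactly what the definition of an integrable representation in \cite{S4} requires, so (i) would follow.

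The heart of the matter is (ii), and this is where I expect the main obstacle. Here one starts from an abstract integrable representation with $\Ker Q_j=\{0\}$. By \rf[QQn] the self-adjoint operators $Q_1,\dots,Q_n$ strongly commute and hence admit a joint spectral decomposition, while \rf[Qy] and \rf[Qx] show that $y_j$ and $x_j$ act as ladder operators rescaling the joint spectrum of the $Q_k$ by fixed powers of $q^2$. Because $|q^2|=1$, these rescalings can only be realised on the continuous spectrum of each $Q_k$ as the translations implemented by the operators $e^{aP}$, and after diagonalising a maximal commuting subfamily I would reduce the problem to Weyl-type pairs to which a Stone--von Neumann uniqueness argument applies. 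The discrete invariants $k_j\in\{0,s(\varphi)\}$ should emerge from the sign and branch ambiguities in solving $Q_k=\pm e^{c_kP}$, and the remaining multiplicity and anticommutation data would be absorbed into the symmetries $\omega_j$ on $\K$. The delicate part will be to keep control of domains and self-adjointness throughout, so that the functional-calculus manipulations are legitimate and the resulting equivalence is genuinely unitary; it is precisely here that the integrability hypothesis is indispensable, as it excludes the pathological, non-well-behaved solutions of the relations.

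For (iii) I would take $D:=\F^{\otimes n}\otimes\K$ with $\F$ as in \rf[F] and verify three points. First, $D$ is dense, since the modified Gaussians $e^{-\epsilon t^2+\gamma t}$ span a dense subspace of $\cL^2(\R)$. Second, $\F$ is invariant under the operations occurring in the models: $e^{bT}$ acts by multiplication with $e^{bt}$ and merely shifts the parameter $\gamma$, while $e^{aP}$ acts as the analytically continued translation $t\mapsto t+\im a$, again returning a function of the same Gaussian type; consequently $D$ is invariant under each $x_j$, $y_j$, $Q_j$ (and $Q_j^{-1}$), and the commutation relations of $\qhn$ hold pointwise on $D$ by direct computation. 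Third, $D$ is a core: the modified Gaussians are analytic vectors for $T$ and $P$, so Nelson's analytic vector theorem yields essential self-adjointness of the generators on $D$, which is the asserted core property.

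Finally, (iv) should follow from (iii) together with a PBW-type argument. The algebra $\qhn$ has a vector-space basis of ordered monomials in the generators, and I would show that their images in $\ld$ are linearly independent: written through the jointly diagonalised $Q_k$ and the shifts $e^{aP}$, distinct monomials translate the continuous spectral variables by distinct amounts, so a non-trivial linear combination cannot vanish identically. Hence the representation is injective, that is, faithful.
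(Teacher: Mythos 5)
First, note that the paper contains no proof of Proposition \ref{Darst} at all: it is prefaced by ``The proof of the following facts can be found in \cite{S4}'' and is imported wholesale from Schm\"udgen's classification of operator representations of the real twisted canonical commutation relations. So there is no argument in the paper to compare yours against; your reconstruction has to stand on its own. Judged that way, parts (i) and (iii) of your sketch are in order and agree with what the paper itself later redoes in Corollary \ref{Q-rep}: density of $\F^{\otimes n}\otimes\K$, invariance under $e^{\alpha T}$ and $e^{\beta P}$ (via \cite[Lemma 1.1]{S3}), and the pointwise verification of the relations \rf[qhn0]--\rf[qhn4]. Two caveats, though. For (i), integrability in the sense of \cite{S4} is a condition on the spectral resolutions of the self-adjoint operators, and even the self-adjointness of $x_l$ in model $(I)$ --- a sum $((-1)^{k_l}qe^{2(\varphi-k_l\pi)P}+1)e^{-T}$ of non-commuting unbounded pieces --- is a theorem, not an observation; you gesture at this but do not supply it. For (ii), what you give is a strategy announcement (Stone--von Neumann reduction, branch invariants $k_j$, multiplicity data in $\K$); the ``delicate'' steps you flag --- domain control, why integrability forces $Q_k=\pm e^{c_kP}$ up to unitary equivalence, the bookkeeping of the signs $\omega_j\omega_l=(-1)^{k_j}\omega_l\omega_j$ --- are precisely the content of the theorem in \cite{S4}, so this part remains a pointer rather than a proof.

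One step is wrong as stated: the faithfulness argument in (iv). Distinct ordered monomials do \emph{not} translate the spectral variables by distinct amounts. In model $(I)$ each $x_l$ is itself a sum of two terms with different translation parts, so the image of a monomial is a sum of several translation components, and different monomials share components: by \rf[QQyx] one has $y_kx_k=Q_{k+1}-q^{-1}Q_k$, so the images of the distinct monomials $1,\,y_1x_1,\ldots,y_nx_n$ all lie in the linear span of the pure translation operators $Q_1,\ldots,Q_{n+1}$. These images are in fact linearly independent, but not for the reason you give. A correct argument must expand each monomial into elementary terms of the form (multiplication by exponentials in the variables $t_j$) composed with (complex translations coming from the $e^{\beta P}$ factors), group a putative vanishing linear combination by its translation part, and then separate the monomials within each group by their multiplication weights, also keeping track of the symmetries $\omega_l$ acting on $\K$; alternatively, accept (iv), like (ii), as part of the package quoted from \cite{S4}, which is what the paper does.
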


In the remainder of this section we shall exclusively
work with rep\-re\-sen\-ta\-tions of the series $(I)$ and assume
$k_n=\ldots=k_1=0$.
It follows from the above formulas that
in this case the operators $Q_l$, $Q_l^{-1}$, $|Q_l|^{1/2}$ and
$|Q_l|^{-1/2}$ are given by
\begin{equation}                                 \label{QQ-1}
Q_l=(-1)^{n-l+1}(\mathop{\otimes}_{j\ge l}e^{2\varphi P})
     \otimes1\otimes\cdots\otimes1,\ \,
Q_l^{-1}=(-1)^{n-l+1}(\mathop{\otimes}_{j\ge l}e^{-2\varphi P})
     \otimes1\otimes\cdots\otimes1,
\end{equation}
\begin{equation*}                                 
|Q_l|^{1/2}=(\mathop{\otimes}_{j\ge l}e^{\varphi P})
     \otimes1\otimes\cdots\otimes1, \quad
|Q_l|^{-1/2}=(\mathop{\otimes}_{j\ge l}e^{-\varphi P})
     \otimes1\otimes\cdots\otimes1.\qquad\quad\mbox{ }
\end{equation*}
By a slight abuse of notation, we denote operators 
and their restrictions to a dense domain $D$ by the same symbol. 

\begin{thc}                                 \label{Q-rep}
Suppose that the operators $x_j$, $y_j$, $j=1,\ldots,n$, are given
by the formulas of the series $(I)$ and assume that
$k_n=\ldots=k_1=0$.
Then there exists a dense domain $D$ of $\Hh$ such that $D$ is an
invariant core for the self-adjoint operators
$x_j$, $y_j$, $Q_j$, $j=1,\ldots,n$, the commutation relations
in $\qhn$ are pointwise fulfilled on $D$,
the *-rep\-re\-sen\-ta\-tion $\pi:\qhn\rightarrow \ld$ is faithful,
and $|Q_j|^{-1/2}\in\ld$.
The operators $x_j$, $y_j$ and $|Q_j|^{1/2}$ satisfy the
following commutation relations.

\begin{align}       \label{Q12y}
\begin{split}
  &|Q_k|^{1/2}y_j  =y_j|Q_k|^{1/2},\ \ j<k, \quad 
  |Q_k|^{1/2}y_j  =q y_j|Q_k|^{1/2},\ \ j\geq k,\\
&|Q_k|^{1/2}x_j  =x_j|Q_k|^{1/2},\ \ j<k, \quad 
|Q_k|^{1/2}x_j  =q^{-1}x_j|Q_k|^{1/2},\ \ j\geq k.
\end{split}
\end{align}
\end{thc}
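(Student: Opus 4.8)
The plan is to obtain the statement almost entirely from the explicit operator models of the series $(I)$ together with Proposition~\ref{Darst}. I would take $D:=\F^{\otimes n}\otimes\K$ with $\F$ as in \eqref{F}. By Proposition~\ref{Darst}(iii) this $D$ is already a common invariant core for the self-adjoint operators $x_j$, $y_j$, $Q_j$ on which the defining relations of $\qhn$ are pointwise satisfied, and by Proposition~\ref{Darst}(iv) the representation $\pi$ is faithful. Hence the only genuinely new claims are that $|Q_j|^{-1/2}\in\ld$ and that the relations \eqref{Q12y} hold; the remaining assertions merely restate Proposition~\ref{Darst}.

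For $|Q_j|^{-1/2}\in\ld$ I would invoke the explicit expression in \eqref{QQ-1}, by which $|Q_j|^{-1/2}$ is a tensor product of factors $e^{-\varphi P}$ and identities. Since $P=\im\,\dd/\dd t$, the factor $e^{-\varphi P}$ acts on $\cL^2(\R)$ as the complex translation $t\mapsto t-\im\varphi$, and a one-line computation shows that it sends each generating function $e^{-\epsilon t^2+\gamma t}$ of $\F$ to a scalar multiple of another such function; thus $e^{-\varphi P}\F\subset\F$ and therefore $|Q_j|^{-1/2}D\subset D$. Because $\varphi\in\R$ and $P$ is self-adjoint, $|Q_j|^{-1/2}$ is symmetric, so $D\subset D((|Q_j|^{-1/2})^*)$ with $(|Q_j|^{-1/2})^*\lceil D=|Q_j|^{-1/2}\lceil D$ again mapping $D$ into $D$; by the definition of $\ld$ this yields $|Q_j|^{-1/2}\in\ld$.

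For \eqref{Q12y} I would compute directly on $D$. The key Weyl relation is $e^{\varphi P}e^{\pm T}=q^{\pm1}e^{\pm T}e^{\varphi P}$, which follows from $[P,T]=\im$ (so that $e^{\varphi P}Te^{-\varphi P}=T+\im\varphi$ and $q=e^{\im\varphi}$). From \eqref{QQ-1} one reads off that $|Q_k|^{1/2}$ carries the factor $e^{\varphi P}$ on exactly the tensor slots where $y_j$, $x_j$ with $j\ge k$ place their $e^{T}$, resp.\ $e^{-T}$, factor, and the identity on the slots corresponding to $j<k$. In the model formulas the factors $e^{\varphi P}$ appearing in $y_j$, $x_j$ away from this slot commute with those of $|Q_k|^{1/2}$, and the symmetry $\omega_j$ commutes with the identity on $\K$; so the only surviving contribution when commuting $|Q_k|^{1/2}$ through $y_j$ (resp.\ $x_j$) is the single Weyl factor $q$ (resp.\ $q^{-1}$), and it appears precisely when $j\ge k$. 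This is exactly \eqref{Q12y}.

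The main obstacle is bookkeeping rather than conceptual. One must match the tensor-slot indexing of $|Q_k|^{1/2}$ in \eqref{QQ-1} against that of $x_j$, $y_j$ and verify that the threshold between the commuting regime and the $q^{\pm1}$-scaling regime falls exactly at $j=k$, and that passing to the half-power $e^{\varphi P}$ (rather than the $e^{2\varphi P}$ sitting in $Q_k$ itself) turns the factor $q^{\pm2}$ of \eqref{Qy}, \eqref{Qx} into the half-factor $q^{\pm1}$. All manipulations are legitimate pointwise on $D$ since $\F$ is invariant under every factor that occurs.
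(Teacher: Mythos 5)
Your proposal is correct and follows essentially the same route as the paper's own proof: take $D=\F^{\otimes n}\otimes\K$, cite Proposition~\ref{Darst} for the core/faithfulness assertions, use the invariance of $\F$ under $e^{\alpha T}$ and $e^{\beta P}$ (which also gives $|Q_j|^{-1/2}\in\ld$), and deduce \eqref{Q12y} from the Weyl relation $e^{\beta P}e^{\alpha T}=e^{\im\beta\alpha}e^{\alpha T}e^{\beta P}$ with $e^{\im\varphi}=q$. Your write-up merely makes explicit the symmetry argument for $\ld$-membership and the tensor-slot bookkeeping that the paper leaves to the reader.
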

\begin{proof}
The assertions concerning $x_j$, $y_j$ and $Q_j$ are just
a repetition of Proposition \ref{Darst}.
Recall from Section \ref{sec-qh} that
$\F:=\Lin \{e^{-\epsilon t^2+\gamma t}\,;\,\epsilon>0,\
\gamma\in\C\}$ and $q=e^{\im \varphi}$ with $|\varphi|<\pi$.
The operators $e^{\alpha T}$ and $e^{\beta P}$, $\alpha,\beta\in\R$,
act on $e^{-\epsilon t^2+\gamma t}\in \F$ by (see \cite[Lemma 1.1]{S3})
$$
e^{\alpha T}(e^{-\epsilon t^2+\gamma t})
=e^{-\epsilon t^2+(\gamma+\alpha)t},\quad
e^{\beta P}(e^{-\epsilon t^2+\gamma t})
=e^{-\epsilon (t+\im \beta)^2+\gamma (t+\im\beta)}.
$$
Obviously, $\F$ is invariant under the action of $e^{\alpha T}$ and
$e^{\beta P}$. Hence $D:=\F^{\otimes n}\otimes\K$ satisfies the
conditions of the corollary.
On $\F$, the operators $e^{\alpha T}$ and $e^{\beta P}$ obey the
commutation relation
\begin{equation}                                       \label{TP}
e^{\beta P}e^{\alpha T}=e^{\im\beta\alpha}e^{\alpha T}e^{\beta P}.
\end{equation}
Now \rf[Q12y] is easily proved by
inserting the expressions of $y_j$, $x_j$, $|Q_j|^{1/2}$ and
applying \rf[TP] since $e^{\im \varphi}=q$.
\end{proof} 
\begin{thl}                                   \label{l-ABrho}
Suppose we are given an integrable *-rep\-re\-sen\-ta\-tion  of
$\qhn$ such that the operators $x_j$, $y_j$
and the domain $D$ satisfy the conditions of
Corollary \ref{Q-rep}. Set $q_0:=e^{\im \varphi/2}$, where
$|\varphi|<\pi$ and $q= e^{\im \varphi}$.
Define
\begin{align}
 \rho_k &= |Q_k|^{1/2} |Q_{k+1}|^{-1}|Q_{k+2}|^{1/2},
 &\!\!\! k&<n, \,\,\, & \rho_n&=|Q_1|^{1/2}|Q_n|^{1/2},         \label{qhnrho}\\
 A_k&=\im \lambda^{-1}q_0^{-1}q^{-1}Q_{k+1}^{-1}x_{k+1}y_k,
 &\!\!\! k&<n,\,\,\, & A_n&= -\im \lambda^{-1}y_n,               \label{qhnA}\\ 
 B_k&=-\im \lambda^{-1}q_0\rho_k^{-1}Q_{k+1}^{-1}y_{k+1}x_k,
 &\!\!\! k&<n,\,\,\, & B_n&= -\im \lambda^{-1}q^{-1}\rho_n^{-1}x_n. \label{qhnB}
\end{align}
Then the operators $\rho_k$, $A_k$, $B_k$ are hermitian and they
obey the following commutation relations:
\begin{equation}                                 \label{qAB1}
\rho_i\rho_j=\rho_j\rho_i,\ \ \rho_j^{-1}\rho_j=\rho_j\rho_j^{-1}=1,
\ \ \rho_iA_j=q^{a_{ij}}A_j\rho_i,\ \ \rho_iB_j=q^{-a_{ij}}B_j\rho_i,
\end{equation}
\begin{equation}                                 \label{qAB2}
A_iA_j-A_jA_i=0,\ \, i\neq j\pm1,
\quad\,
A_j^2A_{j\pm1}-(q+q^{-1})A_jA_{j\pm1}A_j+A_{j\pm1}A_j^2=0,
\end{equation}
\begin{equation}                                \label{qAB3}
B_iB_j  -  B_jB_i  =  0,\ \,i  \neq   j\pm1,
\quad B_j^2B_{j\pm1}  -  (q  +  q^{-1})B_jB_{j\pm1}B_j  
+  B_{j\pm1}B_j^2  =  0,
\end{equation}
\begin{equation}                                \label{qAB4}
  A_iB_j-A_jB_i=0,\ \,\, i\neq j,  \quad\quad
  A_jB_j-B_jA_j=\lambda^{-1}
  (\rho_j-\rho_j^{-1}),\,\  j<n,
\end{equation}
\begin{equation}                               \label{qAB5}
         A_nB_n -B_nA_n \, = \,   -\lambda^{-1}\rho_n^{-1},
\end{equation}
where $(a_{ij})_{i,j=1}^n$ denotes the Cartan matrix of
$\mathrm{sl}(n+1,\C)$.
\end{thl}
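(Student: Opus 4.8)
The plan is to establish \eqref{qAB1}--\eqref{qAB5} by direct computation inside the $O^*$-algebra $\ld$. This is legitimate because, by Corollary \ref{Q-rep}, we work in a concrete integrable representation in which all commutation relations hold pointwise on the invariant core $D$ and $|Q_j|^{-1/2}\in\ld$. The only tools needed are the defining relations \eqref{qhn0}--\eqref{qhn4} of $\qhn$, the scaling relations \eqref{Qy}, \eqref{Qx} and the commutativity \eqref{QQn} of the $Q_j$, the square-root relations \eqref{Q12y}, the identities \eqref{QQyx} rewriting the ``diagonal'' products $y_kx_k$ and $x_ky_k$ through the $Q_j$, and the scalar facts $\bar q=q^{-1}$, $q_0^2=q$ and $\im\lambda^{-1}\in\R$. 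Since all $Q_j^{\pm1}$ and $|Q_j|^{1/2}$ commute, every rearrangement reduces to moving one $x_l$ or $y_l$ past a $Q_k^{\pm1}$, a $|Q_k|^{1/2}$ or another generator, each time producing only a power of $q$. The whole difficulty lies in bookkeeping these powers correctly: for $l\ge k$ one has $x_lQ_k^{-1}=q^{-2}Q_k^{-1}x_l$ whereas $y_lQ_k^{-1}=q^{2}Q_k^{-1}y_l$, and a single wrong exponent destroys the relations.

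Hermiticity comes first. Each $\rho_k$ is a product of mutually commuting hermitian operators, hence hermitian, and $\rho_k^{-1}$ exists because $|Q_j|^{-1/2}\in\ld$. For $A_n=-\im\lambda^{-1}y_n$ the claim is immediate, and for $B_n=-\im\lambda^{-1}q^{-1}\rho_n^{-1}x_n$ one uses $\rho_nx_n=q^{-2}x_n\rho_n$ (from \eqref{Q12y}) together with $\bar q=q^{-1}$. For $A_k$, $B_k$ with $k<n$ one takes the adjoint, which reverses the monomial, and reorders it back using \eqref{qhn2}, \eqref{Qy}, \eqref{Qx}; the accumulated power of $q$ combines with the conjugated scalar, and $q_0^2=q$ is exactly what makes the outcome equal to the original operator. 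The Cartan-type relations \eqref{qAB1} then follow by commuting $\rho_i$ past the generators occurring in $A_j$, $B_j$ by means of \eqref{Q12y}, each factor of $\rho_i$ contributing a definite power of $q$ governed by the index inequalities there; these powers combine to $q^{a_{ij}}$ (resp. $q^{-a_{ij}}$), as one verifies by running through the ranges $|i-j|\ge2$, $|i-j|=1$, $i=j$ and the boundary values $i=n$, $j=n$.

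For the remaining relations the key device is a normal form: using \eqref{QQyx} to eliminate every same-index product, one reduces each quadratic product $A_iA_j$, $B_iB_j$, $A_iB_j$ to a polynomial in the $Q_l^{-1}$ times a single off-diagonal monomial, and each cubic product to a polynomial in the $Q_l^{-1}$ times a degree-three monomial. When $|i-j|\ge2$ the relevant generators are disjoint or only loosely coupled, and after reordering all of them the powers of $q$ cancel, yielding the vanishing relations that make up the first parts of \eqref{qAB2}, \eqref{qAB3} and the identity $A_iB_j=A_jB_i$ of \eqref{qAB4}; the cleanest are the disjoint cases $|i-j|\ge2$, while the adjacent cases again require the substitution \eqref{QQyx}.

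The delicate part, which I expect to be the main obstacle, is the cubic Serre relations in \eqref{qAB2}, \eqref{qAB3} for adjacent indices, together with the mixed commutators \eqref{qAB4}, \eqref{qAB5}. The Serre relation cannot be reduced to a quadratic $q$-commutation: writing $c=\im\lambda^{-1}q_0^{-1}q^{-1}$, one finds for $j<n-1$ that $A_jA_{j+1}=c^2q(Q_{j+1}^{-1}-qQ_{j+2}^{-1})x_{j+2}y_j$ and $A_{j+1}A_j=c^2q^2(Q_{j+1}^{-1}-q^{-1}Q_{j+2}^{-1})x_{j+2}y_j$, which are not proportional, so the full cubic expression must be expanded. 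In the normal form the three terms become $c^3$ times combinations of the two independent monomials $Q_{j+1}^{-2}x_{j+2}x_{j+1}y_j^2$ and $Q_{j+1}^{-1}Q_{j+2}^{-1}x_{j+2}x_{j+1}y_j^2$, and the content of \eqref{qAB2} is that the coefficient of each monomial in $A_j^2A_{j+1}-(q+q^{-1})A_jA_{j+1}A_j+A_{j+1}A_j^2$ vanishes separately. This is precisely where the exponent bookkeeping described above is most error-prone, and the boundary case $j+1=n$ (with the special form of $A_n$) must be handled on its own. For the mixed commutators one normal-orders $A_jB_j$ and $B_jA_j$, substitutes \eqref{QQyx}, and collects terms: for $j=n$ the computation collapses, via \eqref{qhn4} and $1-q^2=-q\lambda$, to $-\lambda^{-1}\rho_n^{-1}$, while for $j<n$ the telescoping in the $Q_l$ produces $\lambda^{-1}(\rho_j-\rho_j^{-1})$.
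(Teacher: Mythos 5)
Your proposal is correct, and on most points it coincides with the paper's own proof: hermiticity is obtained exactly as you describe (taking adjoints and reordering via \eqref{qhn2}, \eqref{Qy}, \eqref{Qx}, with $q_0^2=q$ absorbing the surplus power of $q$); \eqref{qAB1} is obtained by commuting $\rho_i$ through the generators using \eqref{Q12y}; and the equal-index commutators in \eqref{qAB4}, \eqref{qAB5} are computed, as you propose, by normal-ordering, inserting \eqref{QQyx}, and collecting terms. Your intermediate formulas $A_jA_{j+1}=c^2q(Q_{j+1}^{-1}-qQ_{j+2}^{-1})x_{j+2}y_j$ and $A_{j+1}A_j=c^2q^2(Q_{j+1}^{-1}-q^{-1}Q_{j+2}^{-1})x_{j+2}y_j$ (with $c=\im\lambda^{-1}q_0^{-1}q^{-1}$) are both right. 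The genuine divergence is in the cubic Serre relations. You expand all three cubic terms into normal form and check that the coefficients of $Q_{j+1}^{-2}x_{j+2}x_{j+1}y_j^2$ and $Q_{j+1}^{-1}Q_{j+2}^{-1}x_{j+2}x_{j+1}y_j^2$ cancel; this does work, also in the boundary cases built on $A_n$, $B_n$ that you flag but defer. The paper never writes down a cubic: subtracting $q$ times your second quadratic formula from the first yields the $q$-commutator-with-remainder \eqref{AAk} (and analogues \eqref{BBk}--\eqref{BBn}), the remainders are then shown to $q$-commute with the two adjacent $A$'s resp.\ $B$'s in \eqref{hilf1}--\eqref{hilf8}, and the Serre relation falls out of the combination $\eqref{AAk}\cdot A_k-q^{-1}A_k\cdot\eqref{AAk}$, in which the remainder cancels identically. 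The paper's scheme confines all exponent bookkeeping to quadratic products and dispatches both signs and the index-$n$ boundary by one and the same elimination; your scheme needs no auxiliary relations, at the price of three cubic expansions per relation and a separate case distinction at the boundary.

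One caveat, inherited from the statement itself rather than introduced by you: the first relation of \eqref{qAB4} as printed, $A_iB_j-A_jB_i=0$ for $i\neq j$, which you take at face value, cannot hold literally. Already for $n=2$ one finds $A_1B_2$ proportional to $\rho_2^{-1}Q_2^{-1}x_2^2y_1$ while $A_2B_1$ is proportional to $\rho_1^{-1}Q_2^{-1}y_2^2x_1$, and these are linearly independent operators in the faithful representation. It is a misprint for the commutation relation $A_iB_j-B_jA_i=0$, mirroring the misprint $E_iF_j-E_jF_i=0$ in the presentation of $\slrn$ in Section \ref{algprel}; what your normal-form computation (like the paper's ``straightforward computation'') actually establishes is this commutator version, which is also the relation needed in Lemma \ref{qhn-act}.
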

\begin{proof}
Clearly, the operators $\rho_k$ are hermitian. It follows from
\rf[Q12y] that
\begin{align}                               \label{rhoy}
&\rho_j  y_k\!=\!y_k\rho_j,\,\ k\!\neq\! j,j\!+\!1,\,\quad \rho_j  y_j\!=\!qy_j\rho_j,\,\quad
\rho_j  y_{j+1} \!=\!q^{-1}y_{j+1}\rho_j,\\
                                                         \label{rhox}
&\rho_j  x_k\!=\!x_k\rho_j,\,\ k\!\neq\! j,j\!+\!1,\,\quad \rho_j  x_j\!=\!q^{-1}x_j\rho_j,
\,\quad \rho_j  x_{j+1} \!=\!qx_{j+1}\rho_j,\\
                                                        \label{rhon}
&\rho_n  y_k\!=\!qy_k\rho_n,\ \rho_n  x_k\!=\!q^{-1}x_k\rho_n,\,\ k\!<\!n,\quad
\rho_n  y_n \!=\!q^2y_n\rho_n,\ \rho_n  x_n \!=\!q^{-2}x_n\rho_n.
\end{align}
Observe that $\im\lambda\in\R$ and $q=q_0^2$. Thus,
by \rf[qhn2], \rf[Qy], \rf[Qx] and the preceding, we have: 
\begin{align*}
 &k\neq n: & &
 A_k^*=\im\lambda^{-1} q_0q y_kx_{k+1}Q_{k+1}^{-1}=
\im \lambda^{-1}q_0q^{-2}Q_{k+1}^{-1}x_{k+1}y_k=A_k, \\
 & & & B_k^*=-\im \lambda^{-1}q_0^{-1}x_ky_{k+1}Q_{k+1}^{-1}\rho_k^{-1}
 =-\im \lambda^{-1}q_0^{-1}q\rho_k^{-1}Q_{k+1}^{-1}y_{k+1}x_k=B_k,\\
 &k=n: & & 
 A_n^*= -\im \lambda^{-1}y_n=A_n,\quad
B_n^*= -\im \lambda^{-1}qx_n\rho_n^{-1}
=-\im \lambda^{-1}q^{-1}\rho_n^{-1}x_n=B_n. 
\end{align*}

Equation \rf[qAB1] is easily shown using \rf[QQn]
and \rf[rhoy]--\rf[rhon].
The first relations of \rf[qAB2]--\rf[qAB4] follow by straightforward
computations using the commutation rules in $\qhn$ and
\rf[rhoy]--\rf[rhon].
Let $l<n$. 
Since $(-1)^{n-l+1}(-1)^{n-(l+2)+1}=1$, 
we find from Equations \rf[QQ-1] and \rf[qhnrho] that 
$\rho_l^{-1}Q_lQ_{l+1}^{-2}Q_{l+2}=\rho_l$.
Thus 
\begin{align*}
&A_kB_k-B_kA_k
 = \lambda^{-2}\rho_k^{-1}Q_{k+1}^{-2}
                     (x_{l+1}y_{l+1}y_lx_l-y_{l+1}x_{l+1}x_ly_l)\\
& = \lambda^{-2}\rho_k^{-1}Q_{k+1}^{-2}[(Q_{l+2}\!-\!qQ_{l+1})
(Q_{l+1}\!-\!q^{-1}Q_{l})\!-\!(Q_{l+2}\!-\!q^{-1}Q_{l+1})(Q_{l+1}\!-\!qQ_{l})]\\
& = \lambda^{-2}\rho_k^{-1}Q_{k+1}^{-2}
[(q-q^{-1})Q_{l+2}Q_l-(q-q^{-1})Q_{l+1}^2]
= \lambda^{-1} (\rho_k -\rho_k^{-1}),
\end{align*}
where we applied \rf[QQyx] in the
second equality. 
The proof of \rf[qAB5] is similar and easier. 

To verify the second equations of \rf[qAB2] and \rf[qAB3],
we first observe that
\begin{eqnarray}
A_{k-1}A_k &=& qA_kA_{k-1}+\lambda^{-1}q^{-1}Q_k^{-1}x_{k+1}y_{k-1},
 \quad \,  k<n,                                    \label{AAk}\\
B_{k-1}B_k &=& q^{-1}B_kB_{k-1}
-\lambda^{-1}\rho_{k-1}^{-1}\rho_k^{-1}Q_k^{-1}y_{k+1}x_{k-1},
 \quad \,  k<n,                                   \label{BBk}  \\
A_{n-1}A_n&=&qA_nA_{n-1}-\lambda^{-1}q_0Q_n^{-1}y_{n-1},
                                               \label{AAn-1}   \\
B_{n-1}B_n&=&q^{-1}B_nB_{n-1}-                  \label{BBn}
   \lambda^{-1}q_0^{-1}q^{-1}\rho_{n-1}^{-1}\rho_n^{-1}Q_n^{-1}x_{n-1}.
\end{eqnarray}
To see this, consider
\begin{align*}
&\rho_{k-1}^{-1}Q_{k}^{-1}y_{k}x_{k-1}\rho_{k}^{-1}Q_{k+1}^{-1}y_{k+1}x_k
=\rho_{k}^{-1} Q_{k+1}^{-1}y_{k+1}y_{k}x_k
\rho_{k-1}^{-1}Q_{k}^{-1}x_{k-1}\\
&\hspace{60pt}=\rho_{k}^{-1}Q_{k+1}^{-1}y_{k+1}(q^{-2}x_ky_{k}-q^{-2}(1-q^2)Q_{k+1})
                                \rho_{k-1}^{-1}Q_{k}^{-1}x_{k-1}\\
&\hspace{60pt}=q^{-1}Q_{k+1}^{-1}y_{k+1}x_kQ_{k}^{-1}y_{k}x_{k-1}
+\lambda q^{-1}Q_{k}^{-1}y_{k+1}x_{k-1},
\end{align*}
where the second equality was obtained by inserting
Equation \rf[xyQ].
Multiplying both sides by
$(-\im \lambda^{-1}q_0)^2$
gives \rf[BBk] since
 $(-\im \lambda^{-1}q_0)^2\lambda q^{-1}=-\lambda^{-1}$.
Equations \rf[AAk], \rf[AAn-1] and \rf[BBn] are proved similarly.

Next we claim that
\begin{eqnarray}
A_kQ_k^{-1}x_{k+1}y_{k-1}&=&qQ_k^{-1}x_{k+1}y_{k-1}A_k,\label{hilf1}\\
 A_{k-1}Q_k^{-1}x_{k+1}y_{k-1}&=&q^{-1}Q_k^{-1}x_{k+1}y_{k-1}A_{k-1},
                                                      \label{hilf2}\\
 B_k\rho_{k-1}^{-1}\rho_k^{-1}Q_k^{-1}y_{k+1}x_{k-1}&=&
 q^{-1}  \rho_{k-1}^{-1}\rho_k^{-1}Q_k^{-1}y_{k+1}x_{k-1}B_k,  \\
   B_{k-1}\rho_{k-1}^{-1}\rho_k^{-1}Q_k^{-1}y_{k+1}x_{k-1}&=&
 q \rho_{k-1}^{-1}\rho_k^{-1}Q_k^{-1}y_{k+1}x_{k-1}B_{k-1}, \\
 A_nQ_n^{-1}y_{n-1}&=&qQ_n^{-1}y_{n-1}A_n, \\
 A_{n-1}Q_n^{-1}y_{n-1}&=&qQ_n^{-1}y_{n-1}A_{n-1}, \\
 B_n\rho_{n-1}^{-1}\rho_n^{-1}Q_n^{-1}x_{n-1}
&=&q^{-1}\rho_{n-1}^{-1}\rho_n^{-1}Q_n^{-1}x_{n-1}B_n,  \\
 B_{n-1}\rho_{n-1}^{-1}\rho_n^{-1}Q_n^{-1}x_{n-1}
&=&q\rho_{n-1}^{-1}\rho_n^{-1}Q_n^{-1}x_{n-1}B_{n-1}. \label{hilf8}
\end{eqnarray}
All these equations are easily shown by straightforward calculations.
As a sample,
\begin{align*}
&Q_{k+1}^{-1}x_{k+1}y_kQ_k^{-1}x_{k+1}y_{k-1}
=Q_k^{-1}Q_{k+1}^{-1}x_{k+1}y_kx_{k+1}y_{k-1}\\
&\hspace{60pt}=qQ_k^{-1}x_{k+1}Q_{k+1}^{-1}x_{k+1}y_ky_{k-1}
=qQ_k^{-1}x_{k+1}y_{k-1}Q_{k+1}^{-1}x_{k+1}y_k
\end{align*}
implies \rf[hilf1].

Now, the second equations of \rf[qAB2] and \rf[qAB3]
follow readily from \rf[AAk]--\rf[BBn] and \rf[hilf1]--\rf[hilf8].
For example, if $k<n$, then computing
$\rf[AAk]\cdot A_k-q^{-1}A_k \cdot \rf[AAk]$ and
applying \rf[hilf1] gives \rf[qAB2] with the plus sign, and
$A_{k-1} \cdot \rf[AAk]-q^{-1}\rf[AAk]\cdot A_{k-1}$ together
with \rf[hilf2] gives \rf[qAB2] with the minus sign.
By the same method one proves the remaining relations.
\end{proof}

We are now in a position to present the operator expansion
of the action announced in the beginning of this section.

\begin{thl}                                      \label{qhn-act}
Suppose we are given an integrable *-rep\-re\-sen\-ta\-tion  of
$\qhn$ such that the operators $x_j$, $y_j$ 
and the domain $D$ satisfy the conditions of
Corollary \ref{Q-rep}. With the  operators $\rho_k$, $A_k$ and $B_k$
defined in Lemma \ref{l-ABrho}, set
\begin{eqnarray}
  K_j\,\ang\, f &=&\rho_jf\rho_j^{-1},\ \quad K_j^{-1}\,\ang\, f
 \ =\ \rho_j^{-1}f\rho_j,                           \label{qhnact1}\\
  E_j\,\ang\, f &=& A_jf-\rho_jf\rho_j^{-1}A_j,     \label{qhnact2}\\
  F_j\,\ang\, f &=& B_jf\rho_j-q^2f\rho_jB_j      \label{qhnact3}
\end{eqnarray}
for $j=1,\ldots,n$.
Then Equations \rf[qhnact1]--\rf[qhnact3] define a $\slrn$-action
$\ang$ on $\ld$ turning 
$\ld$ into a $\slrn$-module *-algebra.
Its restriction to $\qhn$, considered as subalgebra of $\ld$, is given
by the formulas \rf[Anfang]--\rf[Ende].
\end{thl}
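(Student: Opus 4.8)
The plan is to mirror the two stages of the proof of Lemma~\ref{qh-act}: first show that \eqref{qhnact1}--\eqref{qhnact3} make $\ld$ a $\slrn$-module *-algebra, and then evaluate the resulting action on the generators $x_k,y_k$ of $\qhn$ to recover \eqref{Anfang}--\eqref{Ende}. All the algebraic input is supplied by Lemma~\ref{l-ABrho}: the operators $\rho_j,A_j,B_j$ are hermitian and satisfy \eqref{qAB1}--\eqref{qAB5}, and $\rho_j^{\pm1},A_j,B_j\in\ld$, so the formulas act by multiplication and keep us inside $\ld$.

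The conceptual device for the first stage is to recognize \eqref{qhnact1}--\eqref{qhnact3} as the adjoint-type action $Z\ang f=\pi(Z_{(1)})\,f\,\pi(S(Z_{(2)}))$ associated with the assignment $K_j\mapsto\rho_j$, $E_j\mapsto A_j$, $F_j\mapsto B_j$; here one uses $\Delta$ and $S$ from the Preliminaries, together with $\rho_jB_j=q^{-2}B_j\rho_j$ to bring $F_j\ang f$ into the form \eqref{qhnact3}. By \eqref{qAB1}--\eqref{qAB2} the assignment $K_j\mapsto\rho_j$, $E_j\mapsto A_j$ is a representation of the positive Borel Hopf subalgebra $\cU_q(\mathfrak b^{+})$ (generated by the $K_j^{\pm1}$ and $E_j$), and by \eqref{qAB1} and \eqref{qAB3} the assignment $K_j\mapsto\rho_j$, $F_j\mapsto B_j$ is a representation of $\cU_q(\mathfrak b^{-})$. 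Since the adjoint action of a Hopf algebra through a genuine representation is automatically a module-algebra action, every defining relation of $\slrn$ internal to one of the two Borel subalgebras---in particular all quantum Serre relations---is satisfied by the action operators with no extra work. This is what spares us a direct verification of the Serre relations, which would otherwise be the most painful computation.

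It remains to check the relations that straddle both Borel halves. The off-diagonal relations $E_iF_j-E_jF_i=0$ for $i\neq j$ follow by a direct computation from \eqref{qhnact2}--\eqref{qhnact3}: after commuting the $\rho$-factors past $A_i$ and $B_j$ by means of \eqref{qAB1}, the terms cancel in pairs by the first relation of \eqref{qAB4}. For the diagonal commutator the action formulas reduce, exactly as in Lemma~\ref{qh-act}, to
\begin{equation*}
(E_jF_j-F_jE_j)\ang f=(A_jB_j-B_jA_j)\,f\rho_j-\rho_j f\,(A_jB_j-B_jA_j).
\end{equation*}
Here the subtle point is that the commutator $A_jB_j-B_jA_j$ has a different form for $j<n$ and for $j=n$: inserting \eqref{qAB4} gives $\lambda^{-1}(\rho_j f\rho_j-\rho_j^{-1}f\rho_j-\rho_j f\rho_j+\rho_j f\rho_j^{-1})$, in which the $\rho_j f\rho_j$ terms cancel, whereas inserting the anomalous relation \eqref{qAB5} gives $\lambda^{-1}(\rho_n f\rho_n^{-1}-\rho_n^{-1}f\rho_n)$ directly; both collapse to $\lambda^{-1}(\rho_j f\rho_j^{-1}-\rho_j^{-1}f\rho_j)=\lambda^{-1}(K_j-K_j^{-1})\ang f$. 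It is precisely this cancellation that lets the anomalous commutator \eqref{qAB5} still produce the correct $\lambda^{-1}(K_n-K_n^{-1})$. The axioms \eqref{modeins} and \eqref{modalg} are then verified on the generators $K_j,E_j,F_j$ by the same per-generator computations as in Lemma~\ref{qh-act}, the *-compatibility using $A_j^*=A_j$, $B_j^*=B_j$, $\rho_j^*=\rho_j$; they extend to all of $\slrn$ and all of $\ld$ by coassociativity and multiplicativity.

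For the second stage, since $\ld$ is a module *-algebra the action on the subalgebra $\qhn$ is determined by its values on the generators, so it suffices to compute $K_j\ang x_k$, $K_j\ang y_k$, $E_j\ang x_k$, $E_j\ang y_k$, $F_j\ang x_k$ and $F_j\ang y_k$. Substituting the explicit forms \eqref{qhnA}--\eqref{qhnB} of $A_j$ and $B_j$ and repeatedly applying \eqref{rhoy}--\eqref{rhon} together with the commutation rules of $\qhn$ reproduces \eqref{Anfang}--\eqref{Ende}, and in particular shows that $\qhn$ is stable under the action. I expect this final evaluation to be the most laborious part of the argument: it splits into many cases according to the position of $k$ relative to $j$ and according to whether $j<n$ or $j=n$. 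Each case is nonetheless a routine, if tedious, bookkeeping calculation with no further conceptual obstacle, the essential algebra having already been carried out in Lemma~\ref{l-ABrho}.
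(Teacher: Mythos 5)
Your proposal is correct, and its two-stage skeleton (first make $\ld$ a $\slrn$-module *-algebra, then evaluate on the generators $x_k,y_k$) is the same as the paper's; but the crucial first stage is settled by a genuinely different argument. The paper does not verify well-definedness directly: it observes that $\rho_j,A_j,B_j$ obey the same commutation relations as the corresponding operators in \cite[Lemmas 4.1 and 4.2]{KW} and imports that (computational) proof wholesale, re-proving only the compatibility with the involution by substituting $\rho_j,A_j,B_j$ for $Q,A,B$ in Lemma~\ref{qh-act}. You replace the citation by a self-contained Hopf-theoretic argument: since $K_j\mapsto\rho_j$, $E_j\mapsto A_j$ and $K_j\mapsto\rho_j$, $F_j\mapsto B_j$ are genuine representations of the two Borel halves by \eqref{qAB1}--\eqref{qAB3}, the adjoint-action formulas $Z\ang f=\pi(Z_{(1)})f\pi(S(Z_{(2)}))$ automatically satisfy every relation internal to either half---in particular the quantum Serre relations, the most painful part of any direct check---and only the cross relations between $E_i$ and $F_j$ remain to be verified by hand. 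This buys economy and, more importantly, isolates exactly where the naive picture breaks down: by \eqref{qAB5} the full assignment is \emph{not} a representation of $\slrn$ (consistently, the paper's closing remark claims a *-representation only of $\slrm$, i.e.\ for $j\le n-1$). Your computation that the anomalous commutator $A_nB_n-B_nA_n=-\lambda^{-1}\rho_n^{-1}$ still produces $\lambda^{-1}(K_n-K_n^{-1})\ang f$, because the $\rho_n f\rho_n$ terms cancel inside the conjugated expression, is correct and is precisely the cancellation that makes the $n=1$ computation in Lemma~\ref{qh-act} work. One point to state carefully: asserting that \eqref{qAB1}--\eqref{qAB2} already yield a representation of $\cU_q(\mathfrak{b}^+)$ \emph{viewed as a subalgebra of} $\slrn$ tacitly uses the standard presentation of the quantum Borel subalgebra (a PBW/triangular-decomposition fact); you can avoid even this by running the argument with the Borel Hopf algebra defined abstractly by those generators and relations, which is all that your conclusion about the action operators requires. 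The second stage---the case-by-case evaluation on $x_k,y_k$ using \eqref{rhoy}--\eqref{rhon} and the relations of $\qhn$---is exactly what the paper carries out.
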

\begin{proof} The operators $\rho_j$, $A_j$, $B_j$ satisfy
the same commutation relations as the corresponding operators in 
\cite[Lemmas 4.1 and 4.2]{KW} 
(with $\epsilon_1=\ldots=\epsilon_n=1$ in \cite[Equation (64)]{KW}).
Since only these relations are needed in order to verify that 
\rf[qhnact1]--\rf[qhnact3] define a $\sltn$-action on $\ld$, 
the proof of this fact 
runs completely 
analogous to that of 
\cite[Lemmas 4.2]{KW}. 
By the same argument, the 
action satisfies \rf[modeins] and the first equation of \rf[modalg] 
since these relations are independent of the involution. 
It remains to verify that $\ang$ is
consistent with the second equation of \rf[modalg]. 
This follows immediately from the proof of Lemma \ref{qh-act} by replacing
$Q$, $A$ and $B$ with $\rho_j$, $A_j$ and $B_j$, respectively.

Applying \rf[Q12y], one easily proves by direct calculations that 
\rf[qhnact1] yields the action of $K_j^{\pm1}$
on $x_k$ and $y_k$,\,\ $j,k=1,\ldots,n$.

Next, using the commutation rules
of $x_i$, $y_i$ $Q_i$ and $\rho_i$, we obtain for $j<n$
\begin{align*}
  E_j\ang x_k &=\im \lambda^{-1}q_0^{-1}q^{-1}
(Q_{j+1}^{-1}x_{j+1}y_jx_k-\rho_jx_k\rho_j^{-1}Q_{j+1}^{-1}x_{j+1}y_j)\\
 &=\im \lambda^{-1}q_0^{-1}q^{-1}
(Q_{j+1}^{-1}x_{j+1}y_jx_k-Q_{j+1}^{-1}x_{j+1}y_jx_k) = 0,\quad k\neq j,   
\end{align*}
and, similarly, $E_j\ang y_k=0$ if $k\neq j+1$. Further, again for $j<n$, 
\begin{align*}
E_j\ang x_j &=\im \lambda^{-1}q_0^{-1}q^{-1}
(Q_{j+1}^{-1}x_{j+1}y_jx_j-\rho_jx_j\rho_j^{-1}Q_{j+1}^{-1}x_{j+1}y_j)\\
&=\im \lambda^{-1}q_0^{-1}q^{-1}Q_{j+1}^{-1}(q^2y_jx_j-x_jy_j)x_{j+1}
 = \im q_0^{-1}x_{j+1}, \\ 
 E_j\ang y_{j+1} &=\im \lambda^{-1}q_0^{-1}q^{-1}
(Q_{j+1}^{-1}x_{j+1}y_jy_{j+1}
   -\rho_jy_{j+1}\rho_j^{-1}Q_{j+1}^{-1}x_{j+1}y_j)\\
&=\im \lambda^{-1}q_0^{-1}Q_{j+1}^{-1}(x_{j+1}y_{j+1}-y_{j+1}x_{j+1})y_j
 = -\im q_0^{-1}y_{j}, 
\end{align*}
where we used \rf[def-Q] and \rf[xyQ]. 
If $k\neq n$, Equations \rf[qhn0], \rf[qhn2] and \rf[rhon] give 
\begin{align*}
E_n\ang x_k &=-\im \lambda^{-1}(y_nx_k-\rho_nx_k \rho_n^{-1}y_n)
      =-\im \lambda^{-1}(y_nx_k-y_nx_k)=0, \\
 E_n\ang y_k &=-\im \lambda^{-1}(y_ny_k-\rho_ny_k \rho_n^{-1}y_n)
            =-\im \lambda^{-1}(y_ny_k-q^2y_ny_k)=\im qy_ny_k.  
\end{align*}
The action of $E_n$ on $x_n$ and $y_n$ can be
computed by replacing in
the proof of Lemma \ref{qh-act} $x$, $y$, $\rho$ and  $A$ with 
$x_n$, $y_n$, $\rho_n$ and  $A_n$, respectively.

The preceding shows that the action of $E_i$, $i=1,\ldots,n$, 
on the generators of $\qhn$ is given by 
Equations \rf[Anfang], \rf[Ang3], \rf[AngEj] and \rf[Ang7]. 
The analogous statement for $F_i$, $i=1,\ldots,n$, is 
proved similarly. 
\end{proof}

Recall that a generalization of the quantum trace formula 
\cite[Section 7.1.6]{KS} was obtained in \cite{KW} 
by introducing the operator 
$\Gamma:=\prod_{l=1}^n \rho_l^{-l(n-l+1)}$. 
This definition resembles the definition of the distinguished 
element $K_0:=\prod_{l=1}^n K_l^{l(n-l+1)}\in\sltn$ 
satisfying $X K_{0}=K_{0}S^2(X)$ for all  $X\in\sltn$. 
Moreover, $K_0$ appears in the quantum trace as a density operator. 
This analogy will be used in the following proposition to define an 
invariant integral. Note that 
\begin{equation*}                                  
 \Gamma=|Q_1|^{-n}|Q_2|\cdots |Q_n|,\,\ n>1,\, \quad
 \Gamma=|Q_1|^{-1},\,\ n=1,
\end{equation*}
exactly as in \cite[Equation (71)]{KW}. 
\begin{thp}                                       \label{qhn-haar}
Suppose we are given a *-rep\-re\-sen\-ta\-tion  of
$\qhn$ into $\ld$ such that the operators $x_j$, $y_j$, $j=1,\ldots,n$
are given by the formulas of the series $(I)$ with
$k_n=\ldots=k_1=0$.
Assume that $D$ is of the form described in Corollary \ref{Q-rep}.
Let $\fA$ be the O*-algebra generated by the elements of
$\qhn\cup\{|Q_k|^{1/2},|Q_k|^{-1/2}\}_{k=1}^n$.
Then the *-algebras $\FFD$ and $\BBA$ defined in Equations 
\rf[FD] and \rf[BA], respectively, are $\slrn$-module *-algebras,
where the action is given by \rf[qhnact1]--\rf[qhnact3].
The linear functional
\begin{equation}                                  \label{BH}
    h(f):=c\, \tr( \ov{f\Gamma }),\quad c\in\R,
\end{equation}
defines an invariant integral on both $\FFD$ and $\BBA$.
\end{thp}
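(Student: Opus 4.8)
The plan is to follow the strategy of Proposition~\ref{qh-haar}, since the only genuinely new feature in the higher-rank case is the bookkeeping of how the density operator $\Gamma$ intertwines with the operators $A_j$, $B_j$ and $\rho_j$.

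First I would dispose of the module *-algebra claim. The action \rf[qhnact1]--\rf[qhnact3] is implemented by left and right multiplication with $\rho_j^{\pm1}$, $A_j$ and $B_j$, all of which lie in $\fA$ (the operators $\rho_j$ are words in $|Q_k|^{\pm1/2}$, while $A_j$, $B_j$ are built from $x_k$, $y_k$ and $Q_k^{-1}=\pm|Q_k|^{-1}$). Hence $a\,(X_j\ang f)\,b$ is again of finite rank, respectively trace class after multiplication by elements of $\fA$, with the required domain properties whenever $f$ is, so both $\FFD$ and $\BBA$ are stable; that they are $\slrn$-module *-algebras then follows by restricting Lemma~\ref{qhn-act}.

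For invariance, since $\ang$ is associative and $\vare$ multiplicative, it suffices to test $h$ on the generators $K_j$, $E_j$, $F_j$. The engine is the cyclicity $\tr(\ov{agb})=\tr(\ov{gba})=\tr(\ov{bag})$, valid for $a,b\in\fA$ and $g\in\BBA$ by \cite[Corollary~5.1.14]{S}, together with two commutation facts: $\Gamma$ commutes with every $\rho_j$ (both being products of the mutually commuting $\rho_l$, cf.\ \rf[QQn] and \rf[qAB1]), and
\begin{equation*}
   A_j\Gamma=q^2\Gamma A_j,\qquad B_j\Gamma=q^{-2}\Gamma B_j .
\end{equation*}
Granting these, the case $K_j$ is immediate: cycling $\rho_j$ to the right and using $\Gamma\rho_j=\rho_j\Gamma$ collapses $h(K_j\ang g)$ to $h(g)=\vare(K_j)h(g)$. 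For $E_j$ I would cycle the leading $\rho_j$ in the second summand of $E_j\ang g=A_jg-\rho_jg\rho_j^{-1}A_j$ to the back, reduce $\rho_j^{-1}A_j\Gamma\rho_j=q^{-2}A_j\Gamma$ via \rf[qAB1], and compare with the first summand $\tr(\ov{A_jg\Gamma})=\tr(\ov{g\Gamma A_j})=q^{-2}\tr(\ov{gA_j\Gamma})$; the two contributions cancel, giving $h(E_j\ang g)=0=\vare(E_j)h(g)$. The computation for $F_j$ is the mirror image, using $B_j\Gamma=q^{-2}\Gamma B_j$ together with $\rho_jB_j=q^{-2}B_j\rho_j$.

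The one step that requires care, and which I regard as the crux, is establishing $A_j\Gamma=q^2\Gamma A_j$ and $B_j\Gamma=q^{-2}\Gamma B_j$. Writing $\Gamma=\prod_{l=1}^n\rho_l^{-m_l}$ with $m_l=l(n-l+1)$ and pushing $A_j$ through with the help of $A_j\rho_l=q^{-a_{lj}}\rho_lA_j$ from \rf[qAB1], the exponent that appears is $\sum_{l=1}^n a_{lj}\,m_l=2m_j-m_{j-1}-m_{j+1}$, and a short calculation shows this equals $2$ for every $j=1,\dots,n$, including the boundary indices $j=1$ and $j=n$. This identity is precisely the operator shadow of the relation $XK_0=K_0S^2(X)$ recorded before the proposition (here $\Gamma$ playing the role of $K_0^{-1}$ and $S^2(E_j)=q^{-2}E_j$), and it is what makes the generalized quantum trace invariant. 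I would note finally that the distinguished relation \rf[qAB5] for $[A_n,B_n]$ never enters, because invariance uses only the weight relations \rf[qAB1], which hold uniformly in $j$; the special behaviour at $j=n$ is already absorbed into Lemma~\ref{qhn-act}.
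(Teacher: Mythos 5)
Your proof is correct, but it takes a genuinely different route from the paper's own proof, which contains no computation at all: the paper simply observes that the operators $\rho_k$, $A_k$, $B_k$ and $\Gamma$ satisfy the same commutation relations as the corresponding operators in \cite{KW} (with $\epsilon_1=\ldots=\epsilon_n=1$ there), and that the proof of \cite[Proposition 4.2]{KW} uses only these relations, so that proof transfers verbatim. What you have done is, in effect, to reconstruct that outsourced argument inside the present paper by extending the proof of Proposition \ref{qh-haar} from $n=1$ to general $n$. Your identification of the crux is exactly right: after reducing invariance to generators and invoking cyclicity $\tr(\ov{agb})=\tr(\ov{gba})=\tr(\ov{bag})$ for $a,b\in\fA$, $g\in\BBA$, everything rests on $\Gamma\rho_j=\rho_j\Gamma$ together with $A_j\Gamma=q^2\Gamma A_j$ and $B_j\Gamma=q^{-2}\Gamma B_j$, and these in turn hinge on the exponent identity $2m_j-m_{j-1}-m_{j+1}=2$ for $m_l=l(n-l+1)$ (with $m_0=m_{n+1}=0$), which you correctly verify including the boundary indices $j=1,n$; this is indeed the operator counterpart of $XK_0=K_0S^2(X)$, with $\Gamma$ in the role of $K_0^{-1}$. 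Your closing remark that the anomalous relation \rf[qAB5] never enters, because invariance uses only the weight relations \rf[qAB1], is also correct, and it is precisely the reason the paper can quote \cite{KW} unchanged despite the different involution and the modified commutator at $j=n$. The trade-off between the two routes: the paper's citation is shorter and emphasizes that the argument is insensitive to the involution and to which real form one works with, while your version is self-contained, checkable without access to \cite{KW}, and makes visible the mechanism (the Cartan-matrix exponent computation) that makes the generalized quantum trace invariant.
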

\begin{proof} Since the operators $\rho_k$, $A_k$, $B_k$ from
Lemma \ref{qhn-act}
and $\Gamma$ satisfy the same commutation relations as the 
corresponding operators in \cite{KW} (with $\epsilon_1=\ldots=\epsilon_n=1$ in \cite[Equation (64)]{KW}),
and since only these relations are needed in the proof of
\cite[Proposition 4.2]{KW}, the proof of Proposition \ref{qhn-haar}
is literally the same.
\end{proof}

Observe  that $z_n$, $z_n^*$, $K_n^{\pm1}$, $E_n$ and $F_n$ satisfy the
relations of $\qh$. In particular, Equation \rf[1=0] applies, 
telling us that we are dealing with a non-compact quantum space.
Again, $\BBA$ is considered as
the algebra of infinitely differentiable functions which vanish
sufficiently rapidly at ``infinity'' and $\FFD$ as the
infinitely differentiable functions with compact support.

Let us finally remark that, 
for $n\!>\!1$, the operators $K_j^{\pm1}$\!, $E_j$
and $F_j$, $j\!=\!1,\ldots,n\!-\!1$, generate the Hopf *-algebra
$\slrm$, and \rf[Anfang]--\rf[Ang2] define a
$\slrm$-action on $\qhn$ such that $\qhn$ becomes a
$\slrm$-module *-algebra. This action on $\qhn$ is well-known
because it can be obtained from a $\cO(\mathrm{SL}_q(n,\R))$-coaction
and a dual pairing of $\slrm$ and $\cO(\mathrm{SL}_q(n,\R))$ 
(see Sections 1.3.5, 9.3.3 and 9.3.4 in \cite{KS}). 
Now Proposition \ref{qhn-haar}
asserts that we can develop a $\slrm$-invariant integration theory
on $\qhn$, that is, the *-algebras $\FFD$ and $\BBA$
are $\slrm$-module *-algebras and Equation \rf[BH] defines a
$\slrm$-invariant functional on both algebras. Note furthermore 
that, under the assumptions of Lemma \ref{l-ABrho}, we obtain a
\mbox{*-rep}\-re\-sen\-ta\-tion $\pi:\slrm\rightarrow\ld$ by assigning
$\pi(K_j)=\rho_j$, $\pi(E_j)=A_j$ and $\pi(F_j)=B_j$
for $j=1,\ldots,n-1$.

\section*{Acknowledgments}
%
E.W. thanks Klaus-Detlef K\"ursten for his support and useful comments. 
This work was partially supported by the DFG fellowship WA 1698/2-1.

\end{document}